\numberwithin{equation}{section}
\newtheorem*{Thm*}{Theorem}
\newtheorem*{MainThm*}{Main Theorem}
\newtheorem{Lem}[equation]{Lemma}
\newtheorem{Prop}[equation]{Proposition}
\newtheorem{Thm}[equation]{Theorem}
\theoremstyle{remark}
\newtheorem{Def}[equation]{Definition}
\newtheorem{Exa}[equation]{Example}
\newtheorem{Rem}[equation]{Remark}
\newtheorem*{Que*}{Question}
\newcommand{\nc}{\newcommand}
\nc{\dmo}{\DeclareMathOperator}
\dmo{\Aut}{Aut}
\dmo{\chara}{char}
\dmo{\Der}{D}
\dmo{\End}{End}
\dmo{\HH}{H}
\dmo{\Hom}{Hom}
\dmo{\Phom}{PHom}
\dmo{\Id}{Id}
\dmo{\Ind}{Ind}
\dmo{\Infl}{Infl}
\dmo{\inj}{inj}
\dmo{\Kom}{K}
\dmo{\modname}{mod}%
\dmo{\Mod}{Mod}
\dmo{\Obj}{Obj}
\dmo{\Dim}{Dim}
\dmo{\opname}{op}
\dmo{\pr}{pr}
\dmo{\proj}{proj}
\dmo{\Qcoh}{Qcoh}
\dmo{\Rad}{Rad}
\dmo{\Res}{Res}
\dmo{\smallb}{b}
\dmo{\stabname}{stmod}
\dmo{\Stabname}{StMod}
\dmo{\tr}{tr}
\nc{\adjto}{\rightleftarrows}
\nc{\AK}{A\MModcat{K}}
\nc{\bbZ}{\mathbb{Z}}
\nc{\CC}{\mathcal{C}}
\nc{\CI}{\mathcal{I}}
\nc{\CJ}{\mathcal{J}}
\nc{\calO}{\mathcal{O}}
\nc{\cat}[1]{\mathscr{#1}}
\nc{\Db}{\Der^{\smallb}}
\nc{\Displ}{\displaystyle}
\nc{\Endcat}[1]{\End_{\cat #1}}
\nc{\eps}{\epsilon}
\nc{\Homcat}[1]{\Hom_{\cat #1}}
\nc{\hook}{\hookrightarrow}
\nc{\ie}{{\sl i.e.}\ }
\nc{\into}{\mathop{\rightarrowtail}}
\nc{\inv}{^{-1}}
\nc{\isotoo}{\overset{\sim}{\,\too\,}}
\nc{\isoto}{\overset{\sim}{\,\to\,}}
\nc{\Jon}[1]{{\color{Green}#1}}
\nc{\Kb}{\Kom^{\smallb}}
\nc{\kk}{\Bbbk}
\nc{\okk}{\overline{\kk}}
\nc{\Mid}{\,\big|\,}
\nc{\MModcat}[1]{\MMod_{\cat #1}}%
\nc{\mmod}{\text{-}\modname}%
\nc{\MMod}{\text{-}\Mod}%
\nc{\onto}{\mathop{\twoheadrightarrow}}
\nc{\op}{^{\opname}}
\nc{\otoo}[1]{\overset{#1}{\,\too\,}}
\nc{\oto}[1]{\overset{#1}\to}
\nc{\Paul}[1]{{\color{Violet}#1}}
\nc{\potimes}[1]{^{\otimes #1}}
\nc{\pproj}{\,\text{-}\proj}%
\nc{\ptimes}[1]{^{\times #1}}
\nc{\Radcat}[1]{\Rad_{\cat #1}}
\nc{\Rat}{\Rad^{\otimes}}
\nc{\Ratcat}[1]{\Rat_{\cat #1}}
\nc{\restr}[1]{_{|_{\scriptstyle #1}}}
\nc{\SET}[2]{\big\{\,#1\Mid#2\,\big\}}
\nc{\smat}[1]{\left(\begin{smallmatrix} #1 \end{smallmatrix}\right)}
\nc{\Sn}{\mathfrak{S}_n}
\nc{\Spmo}{\mathfrak{S}_{p-1}}
\nc{\sstab}{\text{-}\stabname}%
\nc{\SStab}{\text{-}\Stabname}%
\nc{\then}{$\Rightarrow$}
\nc{\too}{\mathop{\longrightarrow}\limits}
\nc{\unit}{\mathbb{1}}
\begin{document}


\title[Separable rings in stable category over cyclic~$p$-groups]
{Separable commutative rings in the stable module category of cyclic groups}
\author{Paul Balmer}
\author{Jon F. Carlson}
\date{\today}

\address{Paul Balmer, Mathematics Department, UCLA, Los Angeles,
CA 90095-1555, USA}
\email{balmer@math.ucla.edu}
\urladdr{http://www.math.ucla.edu/$\sim$balmer}

\address{Jon Carlson, Department of Mathematics, University of Georgia,
  Athens, GA 30602, USA}
\email{jfc@math.uga.edu}
\urladdr{http://www.math.uga.edu/~jfc}

\begin{abstract}
We prove that the only separable commutative ring-objects in
the stable module category of a finite cyclic $p$-group~$G$
are the ones corresponding to subgroups of~$G$.
We also describe the tensor-closure of the
Kelly radical of the module category and of the stable module
category of any finite group.
\end{abstract}

\subjclass[2010]{20C20; 14F20, 18E30}
\keywords{Separable, etale, ring-object, stable category}

\thanks{First author was supported by NSF grant~DMS-1600032 and
Research Award of the Humboldt Foundation. The second author was
partially supported by NSA grant H98230-15-1-0007 and by Simons
Foundation grant 054813-01.}

\maketitle

\tableofcontents

\section*{Introduction}


Since 1960 and the work of Auslander and
Goldman~\cite{AuslanderGoldman60},
an algebra~$A$ over a commutative ring~$R$ is called
\emph{separable} if $A$ is projective as an $A\otimes_R A\op$-module.
This notion turns out to be remarkably important in many other contexts,
where the module category
$\cat C=R\MMod$ and its tensor $\otimes=\otimes_R$ are replaced by an arbitrary tensor
category~$(\cat C,\otimes)$. A ring-object $A$ in such a
category~$\cat C$ is \emph{separable} if multiplication
$\mu:A\otimes A\to A$ admits a section $\sigma:A\to A\otimes A$
as an $A$-$A$-bimodule in~$\cat C$. See details in Section~\ref{se:basics}.
Our main result (Theorem~\ref{thm:main}) concerns itself with modular
representation theory of finite groups:
\begin{MainThm*}
Let $\kk$ be a separably closed field of characteristic~$p>0$ and let
$G$ be a cyclic $p$-group. Let $A$ be a commutative and separable
ring-object in the stable category~$\kk G \sstab$ of finitely
generated $\kk G$-modules modulo projectives. Then there exist
subgroups $H_1,\ldots ,H_r\le G$ and an isomorphism of
ring-objects $A\simeq \kk(G/H_1)\times \cdots \times \kk(G/H_r)$.
(The ring structure on the latter is recalled below.)
\end{MainThm*}

Separable and commutative ring-objects are particularly interesting
in tensor-triangulated categories, like the above stable module
category $\kk G\sstab$.
There are several reasons for this. First, from the
theoretical perspective, if $\cat K$ is a tensor-triangulated
category (called \emph{tt-category} for short) and if $A$ is a separable
and commutative ring-object in~$\cat K$ (called \emph{tt-ring} for short)
then the category $\AK$\,, of $A$-modules in~$\cat K$, remains a
tt-category. See details in~\cite{Balmer11}. On the other hand,
from the perspective of applications,
tt-rings actually come up in many examples. Let us remind the reader.

In algebraic geometry, given an \emph{\'etale} morphism $f:Y\to X$
of noetherian and separated schemes, the object
$A=\textrm{R}f_*(\calO_Y)$ is a tt-ring in $\Der(X)=\Der(\Qcoh(X))$,
the derived
category of~$X$. Moreover, the category of $A$-modules in~$\Der(X)$
is equivalent to the derived category of~$Y$, as a tt-category.
This result is proved in~\cite{Balmer14pp}. Shortly thereafter, and it
is an additional motivation for the present paper, Neeman
proved that these ring-objects $\textrm{R}f_*(\calO_Y)$, together
with obvious localizations, are
the only tt-rings in the derived category~$\Der(X)$. The precise
statement is Theorem~7.10 in~\cite{Neeman15pp}. In colloquial terms,
the only tt-rings which appear in
algebraic geometry come from the \'etale topology.

In view of the above, one might ask: What is the analogue of
the ``\'etale topology" in modular representation theory? This
investigation was started in~\cite{Balmer15}. Let $\kk$ be a field and $G$ a
finite group, and consider $X$ a finite $G$-set. Then the
permutation $\kk G$-module $A=\kk X$ admits a multiplication
$\mu:A\otimes A\to A$ defined by $\kk$-linearly extending the
rule $\mu(x\otimes x)=x$ and $\mu(x\otimes x')=0$ for all
$x\neq x'$ in~$X$; its unit $\kk\to \kk X$ maps $1$ to~$\sum_{x\in X}x$.
This commutative ring-object $A=\kk X$ in~$\kk G\mmod$ is
separable (use $\sigma(x)=x\otimes x$), and consequently gives
a commutative separable ring-object in any tensor category which
receives $\kk G\mmod$ via a tensor functor. Hence, we inherit tt-rings
$\kk X$ in the derived category~$\Db(\kk G\mmod)$ and in the stable
module category of~$\kk G$, which is both the additive
quotient~$\kk G\sstab=\kk G\mmod/\kk G\pproj$ and the Verdier
(triangulated) quotient $\Db(\kk G\mmod)/\Kb(\kk G\pproj)$.

Since finite $G$-sets are disjoint unions of $G$-orbits and since
$\kk(X\sqcup Y)\simeq \kk X \times \kk Y$ as rings, we can focus
attention on tt-rings associated to subgroups~$H\le G$ as
\[
A^G_H:=\kk(G/H)\,.
\]
Here is an interesting fact established
in~\cite{Balmer15} about this tt-ring $A^G_H$.
Let us denote by $\cat K(G)$ either the bounded derived category
$\cat K(G)=\Db(\kk G\mmod)$, or the stable category
$\cat K(G)=\kk G\sstab$, or any variation removing the ``boundedness"
or ``finite dimensionality" conditions. Then the category of $A^G_H$-modules
in~$\cat K(G)$ is equivalent as a tt-category to the corresponding
category~$\cat K(H)$ for the subgroup~$H$:
\[
A^G_H\MModcat{K(G)}\simeq \cat K(H)\,.
\]
This description of restriction to a subgroup $\cat K(G)\to \cat K(H)$ as an
`\'etale extension' in the tt-sense is not specific to linear representation
theory but holds in a variety of equivariant settings, from topology
to C*-algebras, as shown
in~\cite{BalmerDellAmbrogioSanders15}.

\smallbreak

We hope the above short survey motivates the reader for the study of tt-rings,
and we now focus mostly on the stable category~$\cat K(G)=\kk G\sstab$.
In~\cite[Question~4.7]{Balmer15}, the first author asked whether
the above examples are the only ones:

\begin{Que*}
Let $\kk$ be a separably closed field and $G$ a finite group. Let
$A$ be a tt-ring (\ie separable and commutative) in the
stable category~$\kk G\sstab$. Is there a finite $G$-set $X$ such that
$A\simeq \kk X$ in~$\kk G\sstab$?
\end{Que*}

Equivalently, one might ask: Given a tt-ring~$A$ in~$\kk G\sstab$ which
is \emph{indecomposable} as a ring, must we have that $A\simeq\kk(G/H)$ for
some subgroup~$H\leq G$? Less formally, this is asking whether ``the
\'etale topology in modular representation theory" is completely
determined by the subgroups of~$G$, or whether some exotic
tt-rings can appear. Our Main Theorem solves this problem for cyclic
$p$-groups.

Some comments are in order. First, the reason to assume $\kk$
separably closed is obvious: If $L/\kk$ is a finite separable field
extension, then one can consider $L$ as a trivial $\kk G$-module, and
it surely defines a tt-ring in~$\kk G\sstab$ that is indecomposable as a
ring but that has really very little to
do with the group~$G$ itself. Similarly, we focus on the finite-dimensional
$\kk G$-modules, to avoid dealing with (right) Rickard idempotents as
explained in Remark~\ref{rem:idemp}.

We point out that the
answer to the above Question is positive if $\kk G\sstab$ is replaced
by the abelian category of~$\kk G$-modules
(see~\cite[Rem.\,4.6]{Balmer15}).
If $\cat C$ is the category of $\kk$-vector spaces over a field~$\kk$,
then the only commutative and separable $A\in\cat C$ are the finite
products $L_1\times \cdots \times L_n$ of finite separable field
extensions $L_1,\ldots,L_n$ of~$\kk$.
See~\cite[\S\,II.2]{DeMeyerIngraham71} or~\cite[\S\,1]{Neeman15pp}.
In particular, if we assume~$\kk$ separably closed, this ring is simply
$\kk\times\cdots\times\kk$. Remembering the action of~$G$ on the corresponding set of idempotents
is how the result is proved for~$\kk G\MMod$ in~\cite[Rem.\,4.6]{Balmer15}.
For the derived category~$\Der(\kk G\MMod)$ consider the following
related argument.
Under the monoidal functor
$\Res^G_{1}:\Der(\kk G\MMod)\to \Der(\kk)$,
any tt-ring $A$ in~$\Der(\kk G\MMod)$ must go to an object
concentrated in degree 0, by the field case
(see Neeman~\cite[Prop.\,1.6]{Neeman15pp}). Hence, $A$
has only homology in degree zero and belongs to the image of the
fully faithful tensor functor $\kk G\MMod\hook \Der(\kk G\MMod)$.
We are therefore reduced to the module case and the same statement
holds for $\Der(\kk G\MMod)$ as for $\kk G\MMod$: Their
only commutative and separable rings are the announced~$\kk X$ for
finite $G$-sets~$X$.

The question for the stable category is much trickier, mostly
because the ``fiber" functor to the non-equivariant case,
$\Res^G_1:\kk G\sstab\to \kk \sstab=0$, is useless.

Our treatment starts with the case of $G=C_p$, cyclic of prime order.
This turns out to be the critical case. We then proceed relatively
easily to~$C_{p^n}$ by induction on~$n$. Only the case of $C_{4}$
requires an extra argument.

The reader might wonder how the result can be so difficult for
such a ``simple" category as $\kk C_{p^n}\sstab$. Let us point to the
fact that for the arguably even simpler, non-equivariant category
$\cat C=\kk\MMod$ of $\kk$-vector spaces, the proof requires a
couple of pages in DeMeyer-Ingraham~\cite[\S\,II.2]{DeMeyerIngraham71}.
The alternate proof of Neeman~\cite[\S\,1]{Neeman15pp} is equally
long. Our result relies on these predecessors. Most importantly,
the tensor product
in $\kk C_{p^n}\sstab$ becomes rather complicated, even for indecomposable
modules. See Formula~\eqref{eq:i@j} for $C_p$ itself. A critical new
ingredient in the stable category of~$C_p$ is the fact that the symmetric
module $S^{p-1}[i]$ over the indecomposable $\kk C_p$-module~$[i]$
of dimension~$i$ is projective, for every $i>1$. This fact was
established by Almkvist and Fossum in~\cite{AlmkvistFossum78}.
In addition, the Kelly radical of~$\kk C_p\sstab$ is a
tensor-ideal, a fact which we show in Section~\ref{se:Kelly}.
It is a very special feature of this case, as we also explain:
When $p^2$ divides the order of~$G$, the Kelly radical of
$\kk G\sstab$ is not a $\otimes$-ideal.
More generally, in Section~\ref{se:Kelly} we characterize completely
the smallest $\otimes$-ideal containing the Kelly radical,
for any finite group~$G$. This is Theorem~\ref{thm:tens-rad}
which is of independent interest.

The question discussed here is related to
the Galois group of the stable module
category as an $\infty$-category, as discussed by Mathew~\cite[\S\,9]{Mathew16},
although neither result seems to imply the other.

We remind the reader that for a finite group $G$ and a field $\kk$ of
characteristic $p > 0$, the stable category $\kk G\sstab$ is the category
whose objects are finitely generated $\kk G$-modules and whose morphism are
given by $\Hom_{\kk G\sstab}(M,N) = \Hom_{\kk G}(M, N)/ \Phom_{\kk G}(M,N)$
where $\Phom$ indicates those homomorphisms that factor through a projective
module.

\smallbreak
\textbf{Acknowledgements}:
Both authors would like to thank Bielefeld University for
its support and kind hospitality during a visit when
this work was undertaken. We are also thankful to
Danny Krashen, Akhil Mathew and
Greg Stevenson for valuable discussions.

\goodbreak
\section{Separable ring-objects}
\label{se:basics}%
\medbreak

In this section, we review the needed fundamental results on separable
ring-objects in tensor categories, not necessarily triangulated at first.

Assume that  $\cat C$ is a \emph{tensor category},
meaning an additive, symmetric
monoidal category such that $\otimes:\cat C\times\cat C\too \cat C$ is
additive in each variable. We denote by~$\unit$ the $\otimes$-unit.
A \emph{ring-object} $A$ in~$\cat C$ is a
triple $(A, \mu, u)$ where $A\in\Obj(\cat C)$, $\mu:A\otimes A\to A$
is an associative multiplication, $\mu(\mu\otimes A)=\mu(A\otimes \mu)$,
and the morphism~$u:\unit\to A$ is a two-sided unit,
$\mu(A\otimes u)=1_A=\mu(u\otimes A)$. (If $\cat C$ were not additive,
a common terminology would be ``monoid" instead of ``ring-object".)
The ring-object~$A$ is \emph{commutative} if $\mu(12)=\mu$, where
$(12):A\otimes A\isoto A\otimes A$ is the swap of factors.
By associativity, the composite of multiplications
$A\potimes{n}\otoo{\mu} A\potimes{n-1}\to \cdots \to
A\potimes{2}\otoo{\mu}A$ does not depend on the bracketing and
we simply denote it by~$\mu:A\potimes{n}\to A$.

In this setting, an \emph{$A$-module in the tensor category~$\cat{C}$}
is a pair $(M,\rho)$ where $M$ is an object of the given
category~$\cat C$ (not some `external' abelian group) and
$\rho:A\otimes M\to M$
is a morphism in~$\cat C$ satisfying the usual axioms of
associativity and unital action. Such modules and their
$A$-linear morphisms form an additive category~$A\MModcat{C}$.
It comes with the so-called \emph{Eilenberg-Moore} adjunction
\[
F_A:\cat C\adjto A\MModcat{C}:U_A
\]
where $F_A(X)=(A\otimes X,\mu\otimes X)$ is the free $A$-module
and its right adjoint $U_A(M,\rho)=M$ is the functor forgetting the action.
This material is classical, and is recalled with more
details in~\cite[\S\,2]{Balmer11} for instance.

\begin{Def}
\label{def:sep}%
A ring-object $A$ as above is \emph{separable} if there exists
$\sigma: A\to A\otimes A$ such that $\mu\sigma=1_A$ and
$\sigma\mu=(\mu\otimes A)(A\otimes \sigma)=(A\otimes \mu)(\sigma\otimes A)$.
This amounts to saying that $A$ is projective as an $A\otimes A\op$-module.
\end{Def}

\begin{Exa}
\label{exa:k(G/H)}%
As in the Introduction, for a subgroup~$H\le G$, the
separable commutative rings $A^G_H:=\kk(G/H)$ in~$\cat C=\kk G\sstab$
has multiplication $\mu:A^G_H\otimes A^G_H\to A^G_H$ extending
$\kk$-linearly the formulas $\mu(x\otimes x)=x$ and
$\mu(x\otimes x')=0$ for all $x\ne x'\in G/H$, and unit
$u:\kk\to A^G_H$ given by $u(1)=\sum_{x\in G/H}\,x$. The multiplication
$\mu$ is split by the map  $\sigma: A^G_H \to A^G_H\otimes A^G_H$,
that takes $x \in G/H$ to $\sigma(x) = x \otimes x.$
\end{Exa}

\begin{Prop}
\label{prop:no-nil}%
Let $A$ be a separable commutative ring-object in a
tensor category~$\cat C$. Then we have:
\begin{enumerate}[\rm(a)]
\item
\textbf{Relative semisimplicity of~$A$ over~$\cat C$}:
Let $f:M'\to M$ and $g:M\to M''$ be two morphisms of $A$-modules
in~$\cat C$ such that the underlying sequence of objects
$0\to M'\oto{f} M\oto{g} M''\to 0$ is split-exact in~$\cat C$.
Then the sequence is split-exact as a sequence of
$A$-modules, \ie $f$ admits an $A$-linear retraction~$r:M\to M'$ such that
$\smat{r\\g}: M\isoto M'\oplus M''$ is an isomorphism of $A$-modules.
\smallbreak
\item
\textbf{No nilpotence}: Suppose that $A=I\oplus J$ in~$\cat C$
and that $I$ is an ideal (\ie the morphism
$A\otimes I\to A\otimes A\oto{\mu} A$ factors via $I\hook A$).
Suppose that $I$ is nilpotent (\ie there exists $n\geq 1$
such that $I\potimes{n}\to A\potimes {n}\otoo{\mu}A$ is zero).
Then $I=0$.
\end{enumerate}
\end{Prop}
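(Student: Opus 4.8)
The plan is to prove part~(a) by the classical averaging argument for separable algebras, written diagrammatically, and then to deduce part~(b) from~(a) by upgrading the given decomposition of~$A$ into a decomposition of $A$-modules.

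For~(a), split-exactness in~$\cat C$ provides a morphism $s\colon M\to M'$ in~$\cat C$ with $sf=1_{M'}$, not assumed $A$-linear. Writing $\rho_M,\rho_{M'}$ for the $A$-actions, I would take $r\colon M\to M'$ to be the composite $M\cong\unit\otimes M\xrightarrow{\sigma u\otimes M}A\otimes A\otimes M\xrightarrow{A\otimes\rho_M}A\otimes M\xrightarrow{A\otimes s}A\otimes M'\xrightarrow{\rho_{M'}}M'$. Then $rf=1_{M'}$ follows from $A$-linearity of~$f$ together with $sf=1_{M'}$ and $\mu\sigma=1_A$, while $A$-linearity of~$r$ is exactly the bimodule identity $\sigma\mu=(\mu\otimes A)(A\otimes\sigma)$ (\ie centrality of the separability element~$\sigma u$). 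With such an~$r$ in hand, $\smat{r\\g}\colon M\to M'\oplus M''$ is a morphism of $A$-modules, it is an isomorphism in~$\cat C$ by the usual splitting lemma (a retraction of~$f$ splits the sequence), and the inverse of an $A$-linear isomorphism is automatically $A$-linear, so $\smat{r\\g}$ is an isomorphism of $A$-modules. There is no real difficulty here beyond bookkeeping of coherence isomorphisms.

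For~(b), let $\iota_I,\pr_I$ and $\iota_J,\pr_J$ be the structure maps of $A=I\oplus J$ in~$\cat C$. Since $I$ is an ideal, $\pr_J\circ\mu$ vanishes on $A\otimes\iota_I$, hence factors uniquely as $\rho_J\circ(A\otimes\pr_J)$ for some $\rho_J\colon A\otimes J\to J$; a routine check shows $(J,\rho_J)$ is an $A$-module, that $\pr_J\colon A\to J$ is $A$-linear, and that $0\to I\xrightarrow{\iota_I}A\xrightarrow{\pr_J}J\to 0$ is a sequence of $A$-modules which is split in~$\cat C$. By part~(a) it splits over~$A$, so $J$ too is (isomorphic to) an $A$-submodule of~$A$ and, by commutativity, an ideal; thus $A=I\oplus J$ in~$A\MModcat C$. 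Now set $\varepsilon:=\pr_I\circ u\colon\unit\to I$, $e:=\iota_I\circ\varepsilon\colon\unit\to A$, and define $e'$ from~$J$ in the same way, so $e+e'=u$. Since $I$ and $J$ are both ideals, each of $\mu(e\otimes A)$ and $\mu(e'\otimes A)$ factors through the corresponding inclusion; comparing the sum with $\mu(u\otimes A)=1_A$ and applying~$\pr_I$ forces $\mu(e\otimes A)=\iota_I\pr_I$. Consequently $\mu(e\otimes e)=\iota_I\pr_I\circ e=\iota_I\varepsilon=e$, and iterating via associativity yields $\mu\circ e\potimes n=e$ (as maps $\unit\to A$) for every $n\ge 1$, where $\mu\colon A\potimes n\to A$ denotes the iterated multiplication. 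But $e\potimes n=\iota_I\potimes n\circ\varepsilon\potimes n$ factors through $I\potimes n$, so the nilpotence hypothesis forces $\mu\circ e\potimes n=0$, whence $e=0$. Then $\iota_I\pr_I=\mu(e\otimes A)=0$, so $\iota_I=\iota_I\pr_I\iota_I=0$ and $1_I=\pr_I\iota_I=0$, \ie $I=0$.

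The step I expect to cost the most work is the initial promotion in~(b): constructing the quotient $A$-module structure on~$J$ and checking that the resulting short exact sequence is indeed $\cat C$-split, so that~(a) applies and gives that $J$, too, is an ideal. This is essential --- without it one only knows that $\mu(e\otimes A)$ factors through~$\iota_I$, which is too weak to make $e$ idempotent, and the remaining computation breaks down.
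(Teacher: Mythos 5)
Your proof is correct and follows essentially the same route as the paper: part (a) is the identical averaging construction using the separability section $\sigma$ (the paper cites the same naturality of $r\mapsto\bar r$), and part (b) likewise hinges on using (a) to promote $A=I\oplus J$ to a decomposition of $A$-modules, i.e.\ of ideals, which is exactly the ``promotion'' step you flag. The only difference is cosmetic: the paper concludes by writing $u=\mu\circ u^{\otimes n}$, expanding $(I\oplus J)^{\otimes n}$ and observing that the composite factors through $J$ (so $J$ contains the unit and $I=0$), whereas you track the $I$-component $e$ of the unit, prove $\mu(e\otimes A)=\iota_I\pr_I$ and $e$ idempotent, and kill $e$ by nilpotence --- two bookkeepings of the same computation.
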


\begin{proof}
For~(a), consider a retraction $r:M\to M'$ of
$f:M'\to M$ in~$\cat C$, so that $r\,f=1_{M'}$.
Now, let $\bar r:M \to M'$ be the following composite:
\[
\xymatrix{
M \ar[r]^-{u\otimes 1}
& A\otimes M \ar[r]^-{\sigma\otimes 1}
& A \otimes A \otimes M \ar[r]^-{1\otimes \rho}
& A \otimes M \ar[r]^-{1\otimes r}
& A \otimes M' \ar[r]^-{\rho'}
& M'.
}
\]
This morphism is still a retraction of~$f$ but is now
$A$-linear. The reader unfamiliar with separability could
check these facts to appreciate the non-triviality of this
property. Indeed, the above construction $r\mapsto \bar r$
yields in general a well-defined map
$H:\Homcat{C}(M,M')\to \Hom_{A\MModcat{C}}(M,M')$ which
retracts the inclusion
$\Hom_{A\MModcat{C}}(M,M')\hook \Homcat{C}(M,M')$ and which
is natural in~$M$ and $M'$ in the sense that
$H(frf')=f\,H(r)f'$ whenever $f$ and $f'$ are $A$-linear.
See~\cite[2.9\,(1)]{BoehmBrzezinskiTomasz09}
or~\cite[Prop.\,6.3]{BruguieresVirelizier07} for details.

Part~(b) follows easily from~(a), since now we have that $A=I\oplus J$
as $A$-modules, that is, as ideals. Consider the unit morphism
$u:\unit\to A=I\oplus J$. The composition
\[
\unit=\unit\potimes{n}\otoo{u\potimes{n}}
A\potimes{n}=(I\oplus J)\potimes{n} \otoo{\mu} A
\]
is equal to~$u$ itself. Since $(I\oplus J)\potimes{n}=
I\potimes{n}\oplus (J\otimes(...))$, since $I$ is nilpotent
and since $J$ is an ideal, the above composition
factors via~$J\hook A$ for $n$ big enough. So the ideal $J\subseteq A$ contains the unit.
This readily implies $J=A$ and $I=0$ as claimed.
\end{proof}

\begin{Rem}
\label{rem:idemp}%
In general there are examples of separable commutative ring-objects in
the big stable category $\kk G\SStab$ for a finite group~$G$,
that differ from the objects associated to finite $G$-sets
as in the Introduction. These arise, for instance, as Rickard
idempotents~\cite{Rickard97}. Recall briefly, that to any
specialization-closed subset $Y$ in the
spectrum $V_G(\kk) = \text{Proj}(\HH^*(G,\kk))$
of homogeneous prime ideals in the cohomology ring of~$G$, we
associate an exact triangle in~$\kk G\SStab$
\[
\xymatrix{
\mathcal{E}_Y \ar[r]^\gamma & \kk \ar[r]^\lambda & \mathcal{F}_Y \ar[r] & {}
}
\]
where $\mathcal{E}_{Y}\otimes\mathcal{F}_{Y}=0$ and where $\mathcal{E}_{Y}$ belongs to
the localizing subcategory generated by~${\cat C}_Y:=\SET{M\in\kk G\sstab}{V_G(M) \subseteq Y}$
and $\mathcal{F}_Y$ to its orthogonal, \ie $\Hom_{\kk G\SStab}(M,\mathcal{F}_Y)=0$ for all~$M\in \cat C_Y$.
These properties uniquely characterize $\mathcal{E}_Y$ and~$\mathcal{F}_Y$.
Then there is a
multiplication $\mu:\mathcal{F}_Y \otimes \mathcal{F}_Y \to \mathcal{F}_Y$
inverse to the isomorphism
$\lambda \otimes \mathcal{F}_Y=\mathcal{F}_Y \otimes \lambda$,
turning $\mathcal{F}_Y$ into a tt-ring in~$\kk G\SStab$. The $\kk G$-module~$\mathcal{F}_Y$ is not finitely
generated as soon as $Y$ is non-empty and proper.

This phenomenon is a special case of the general observation that
a right Rickard idempotent in any tensor-triangulated category
is a tt-ring. Its category of modules is nothing but the
corresponding Bousfield (smashing) localization.
\end{Rem}

In the proof of our main theorem, we come across the
following tensor category. Let us describe its separable
commutative ring-objects.
\begin{Prop}
\label{prop:super}%
Let $\kk$ be a field of characteristic~2. The only commutative
separable ring in the $\otimes$-category of $\bbZ/2$-graded
$\kk$-vector spaces are concentrated in degree zero (\ie the separable $\kk$-algebras with trivial grading).
\end{Prop}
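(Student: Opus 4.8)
The plan is to show that the odd part $A_1$ of $A=A_0\oplus A_1$ vanishes; once this is known, $A=A_0$ is automatically a separable $\kk$-algebra in the ordinary sense, since the full subcategory of $\bbZ/2$-graded vector spaces concentrated in degree $0$ is just the category of $\kk$-vector spaces, compatibly with $\otimes$ and its symmetry. It is worth flagging up front that the characteristic-$2$ hypothesis is indispensable: over a field of odd characteristic the algebra $\kk[t]/(t^2-d)$ with $t$ placed in degree $1$ and $d$ a nonsquare is a counterexample — it is even a field, yet genuinely graded, and one writes down a graded separability section by hand. So the argument must use $2=0$ somewhere, and the point is to isolate exactly where.

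First I would push $A$ down to an ordinary algebra: the forgetful functor from $\bbZ/2$-graded $\kk$-vector spaces to $\kk$-vector spaces is symmetric monoidal (a first, harmless, appearance of $2=0$ if one uses the Koszul sign on the source), so $A$ is a commutative separable $\kk$-algebra in the usual sense. Next I would reinterpret the grading as a derivation: in characteristic $2$ the $\kk$-linear projection $D\colon A\to A$, $a_0+a_1\mapsto a_1$, is a $\kk$-derivation of $A$. Indeed the Leibniz rule $D(ab)=a\,D(b)+b\,D(a)$ is checked degree by degree from $A_iA_j\subseteq A_{i+j}$, the only point being that the term $2\,a_1b_1$ that would otherwise spoil it vanishes. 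Therefore it suffices to prove $\mathrm{Der}_{\kk}(A,A)=0$, for then $D=0$ and hence $A_1=0$.

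The remaining point is that a separable commutative $\kk$-algebra is formally unramified over $\kk$, hence admits no nonzero derivation into any module. I would argue directly: set $R=A\otimes_{\kk}A$ and let $J=\ker(\mu\colon R\to A)$ be the ideal generated by the elements $a\otimes 1-1\otimes a$. A separability element $e\in R$ — i.e. $\mu(e)=1$ and $(a\otimes 1)e=(1\otimes a)e$ for all $a$ — satisfies $e^2=e$, annihilates $J$, and has $1-e\in J$; consequently $J=(1-e)R$ with $1-e$ idempotent, so $J=J^2$, giving $\Omega_{A/\kk}=J/J^2=0$ and $\mathrm{Der}_{\kk}(A,A)=\Hom_A(\Omega_{A/\kk},A)=0$. (Alternatively one may invoke the structure theorem recalled in the Introduction, $A\cong L_1\times\cdots\times L_n$ with each $L_i/\kk$ finite separable: a $\kk$-derivation $D$ of $A$ kills each primitive idempotent $\epsilon_i$ since $D(\epsilon_i)=2\epsilon_iD(\epsilon_i)=0$, hence restricts to a derivation of each $L_i$, and these vanish because $\Omega_{L_i/\kk}=0$ for a separable algebraic extension.) Either way $A_1=0$, as desired. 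I do not expect a serious obstacle; the one thing to watch is that the chain ``graded-separable $\Rightarrow$ separable $\Rightarrow$ unramified $\Rightarrow$ no derivations'' is characteristic-free, and the hypothesis $\chara\kk=2$ is used \emph{only} to identify a $\bbZ/2$-grading with an \emph{idempotent derivation} rather than with an order-two automorphism — precisely the step that breaks in odd characteristic, in accordance with the counterexample above.
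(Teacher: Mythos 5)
Your proof is correct, but it takes a genuinely different route from the paper's. The paper first reduces to $\kk$ separably closed (base change being faithful), forgets the grading to conclude from the classification of separable commutative algebras over a field that the underlying algebra is $\kk\times\cdots\times\kk$, and then argues directly on idempotents: writing the image of a standard idempotent as $v_0+v_1$, squaring and comparing degrees in characteristic~$2$ gives $v_1=2v_0v_1=0$, so the even part $V_0$ contains $n=\dim_{\kk}(A)$ orthogonal idempotents and hence $V_1=0$. You instead encode the grading in the projection $D\colon a_0+a_1\mapsto a_1$, observe that $2=0$ is exactly what makes $D$ a $\kk$-derivation, and then kill $D$ by the standard fact that a commutative separable algebra is unramified: your computation with the separability idempotent $e$ gives $J=\ker\mu=(1-e)(A\otimes A)=J^2$, hence $\Omega_{A/\kk}=0$ and $\mathrm{Der}_{\kk}(A,A)=0$. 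Your primary argument needs neither the passage to the separable closure nor the classification theorem (your alternative does invoke it, but redundantly), and it isolates cleanly where the hypothesis $\operatorname{char}\kk=2$ enters; the paper's argument buys elementarity, using only the classification already quoted in its Introduction plus a two-line degreewise computation. One small caveat about your aside: the odd-characteristic example $\kk[t]/(t^2-d)$ with $t$ in degree one is commutative only for the sign-free symmetry on $\bbZ/2$-graded vector spaces (with Koszul signs, graded commutativity would force $2t^2=0$, contradicting $t^2=d\neq0$); since the two symmetries coincide in characteristic~$2$, this ambiguity does not affect your proof, but the claim that the statement fails in odd characteristic depends on which symmetry one fixes.
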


\begin{proof}
As extension-of-scalars from $\kk$ to any bigger field~$L/\kk$
is faithful, we can assume that~$\kk$ is separably closed.
The functor which maps a $\bbZ/2$-graded $\kk$-vector
space $(V_0,V_1)$ to the `underlying' $\kk$-vector space
$V_0\oplus V_1$ is a tensor functor. Suppose $A=(V_0,V_1)$
is a commutative separable $\bbZ/2$-graded $\kk$-algebra
and let us prove that $V_1=0$. Since $\kk$ is separably
closed, the underlying $\kk$-algebra of~$A$ is trivial.
Let $\varphi:\kk\times \cdots \times \kk\isoto V_0\oplus V_1$
be an isomorphism of ungraded $\kk$-algebras.
Consider $e=(0,\ldots,0,1,0,\ldots,0)\in\kk^{\times n}$
one of the idempotents, and let $\varphi(e)=v_0+v_1$
with $v_0\in V_0$ and $v_1\in V_1$ in~$A$. The
relation $\varphi(e)^2=\varphi(e)$, commutativity and
characteristic two, give $v_0=v_0^2+v_1^2$ and $v_1=2v_0v_1=0$.
So $v_0\in V_0$ is an idempotent. Hence, $V_0$ contains~$n$ orthogonal
idempotents, showing that $\dim_{\kk}V_0\ge n=\dim_{\kk}(A)$. Hence
$A=V_0$ and $V_1=0$ as claimed.
\end{proof}

\goodbreak
\section{The Kelly radical and the tensor}
\label{se:Kelly}%
\medbreak

Throughout this section,
$\kk$ is a field of positive characteristic~$p$ dividing the order of~$G$ and modules over a group algebra $\kk G$
are assumed to be finite dimensional. We
begin by recalling the definition of Kelly radical 
of a category~\cite{Kelly64}.
\begin{Def}
The \emph{radical} of an additive category
$\cat C$ is the ideal of morphisms
\[
\Radcat{C}(M,N)=\SET{f:M\to N}{\textrm{ for all }g:N\to M,\
1_M-gf\textrm{ is invertible}}.
\]
When $M=N$, the ideal $\Radcat{C}(M):=\Radcat{C}(M,M)$
is the Jacobson radical of the ring~$\End_{\cat C}(M)$.
\end{Def}

In this section, we give a characterization of the tensor-closure of the
Kelly radical, both in the module category $\kk G\mmod$ and in the stable category
$\kk G\sstab$. In particular, we show that if $G$ is a
cyclic $p$-group, then the Kelly radical is a tensor ideal. The results of
this section are far stronger than what is needed for later sections,
but they are of independent interest.

\begin{Rem}
\label{rem:ideal}%
Recall that in an additive category~$\cat C$ an
\emph{ideal of morphisms}~$\CI$ consists of a collection of subgroups
$\CI(M,N)\subseteq\Homcat{C}(M,N)$ for all object~$M,N$
(we only consider \emph{additive} ideals in this paper),
which is closed under composition:
\begin{equation}
\label{eq:ideal}
\Hom(N,N')\circ \CI(M,N)\circ \Hom(M',M)\subseteq\CI(M',N')\,.
\end{equation}
Then for any decompositions~$M\simeq M_1\oplus \cdots \oplus M_m$
and $N\simeq N_1\oplus \cdots \oplus N_n$ a morphism
$f\in\Homcat{C}(M,N)$ belongs to $\CI(M,N)$ if and only if
each \mbox{$f_{ji}=\pr_j\circ f\circ \inj_i$} belongs to
$\CI(M_i,N_j)$, where $\inj_i:M_i\into M$ and $\pr_j:N\onto N_j$
are the given injections and projections. Hence, an ideal~$\CI$
of morphisms in a Krull-Schmidt category~$\cat C$ is determined
by the subgroups $\CI(M,N)\subseteq\Homcat{C}(M,N)$ for
indecomposable~$M,N$. Conversely, a collection of such
subgroups~$\CI(M,N)\subseteq\Homcat{C}(M,N)$ for all
indecomposable~$M,N$ defines a unique
ideal~$\CI$ if~\eqref{eq:ideal} is satisfied for all
$M,M',N,N'$ indecomposable.

For any ideal~$\CI$, we can form the additive quotient category~$\cat C/\CI$
\begin{equation}
\label{eq:Q}%
Q:\cat C \onto \cat C/\CI
\end{equation}
which has the same objects as~$\cat C$ and morphisms
$\Homcat{C}(M,N)/\CI(M,N)$. When $\CI=\Radcat{C}$, we
have $1_M\notin\Radcat{C}(M)$ unless $M=0$. The corresponding
functor~$Q:\cat C\onto \cat C/\Radcat{C}$ is conservative
(detects isomorphisms).
\end{Rem}

\begin{Rem}
\label{rem:tens-id}%
When $\cat C$ is a tensor category, an ideal~$\CI$ of morphisms
is called a \emph{tensor ideal} (abbreviated $\otimes$-ideal)
if $f\otimes g\in\CI$ whenever $f\in\CI$. This is equivalent to
asking only $f\otimes L\in\CI(M\otimes L,N\otimes L)$ for
every $f\in\CI(M,N)$ and every object~$L$. In that case,
$\cat C/\CI$ becomes a $\otimes$-category and the quotient
$Q:\cat C\onto \cat C/\CI$ is a $\otimes$-functor.

It should be emphasized that the definition of the Kelly radical
$\Radcat{C}$ is not related to the
existence of a tensor structure
on~$\cat C$. In particular, for any specific $\otimes$-category~$\cat C$,
the ideal $\Radcat{C}$ may or may not be a $\otimes$-ideal.
So the quotient functor $Q:\cat C\to \cat C/\Radcat{C}$ is not
necessarily a $\otimes$-functor, even if in specific cases
$\cat C/\Radcat{C}$ admits some `natural' tensor structure for
independent reasons.
\end{Rem}

\begin{Def}
\label{def:Rat}%
We denote by $\Rat$ the smallest $\otimes$-ideal containing~$\Rad$, \ie
the $\otimes$-ideal it generates. We call $\Rat$ the
\emph{tensor-closure of the Kelly radical}.
\end{Def}

Our discussion of the tensor-closure $\Rat$ passes through the
algebraic closure~$\okk$ of~$\kk$. For this reason, we isolate
some well known facts as a preparation:
\begin{Prop}
\label{prop:okk}%
Let $\okk$ be an algebraic closure of~$\kk$. Let $M$ and~$N$ be
finite dimensional $\kk G$-modules, and consider the
$\okk G$-modules~$\okk \otimes_{\kk} M$ and~$\okk\otimes_{\kk} N$. Then:
\begin{enumerate}[\rm(a)]
\item
There is a canonical and natural isomorphism
\begin{equation}
\label{eq:okk}%
\Hom_{\okk G}(\okk \otimes_{\kk} M, \okk\otimes_{\kk} N) \simeq
\okk \otimes_{\kk} \Hom_{\kk G}(M, N)\,.
\end{equation}
\item
\label{it:Lam}%
Under~\eqref{eq:okk} for $M=N$, we have that
$\okk\otimes_{\kk}\Rad_{\kk G}(M)\subseteq\Rad_{\okk G}(\okk\otimes_{\kk} M)$.
\smallbreak
\item
\label{it:MN-summand}%
Suppose $M$ and $N$ are indecomposable.
Then $\okk \otimes_{\kk} M$ and $\okk \otimes_{\kk} N$ have a
nonzero direct summand in common if and only if $M \simeq N$.
\smallbreak
\item
\label{it:ksummand}%
Suppose that the trivial module $\okk$ is a
direct summand of the $\okk G$-module $\okk \otimes_{\kk} M$. Then
$\kk$ is a direct summand of~$M$.
\end{enumerate}
\end{Prop}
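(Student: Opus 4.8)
The plan is to treat the four parts in order, each reducing to a standard fact about extension of scalars along the field extension $\okk/\kk$. For part~(a), I would observe that since $M$ is finite dimensional, $\Hom_{\kk G}(M,N)$ is a finite dimensional $\kk$-vector space and there is an obvious $\okk$-linear map $\okk\otimes_{\kk}\Hom_{\kk G}(M,N)\to \Hom_{\okk G}(\okk\otimes_{\kk}M,\okk\otimes_{\kk}N)$ sending $\lambda\otimes f$ to $\lambda\cdot(\id_{\okk}\otimes f)$. To see it is an isomorphism, note that $\Hom_{\kk G}(M,N)=\Hom_{\kk}(M,N)^G$ is the kernel of the $\kk$-linear map $\Hom_{\kk}(M,N)\to\prod_{g\in G}\Hom_{\kk}(M,N)$, $\phi\mapsto(g\phi g^{-1}-\phi)_g$; since $\okk$ is flat over $\kk$, tensoring with $\okk$ commutes with this kernel, and $\okk\otimes_{\kk}\Hom_{\kk}(M,N)\cong\Hom_{\okk}(\okk\otimes M,\okk\otimes N)$ because $M$ is finite dimensional. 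Naturality in $M$ and $N$ is then immediate from the construction.

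For part~(b), I would use that the Jacobson radical of a finite dimensional algebra behaves well under extension of scalars: writing $E=\End_{\kk G}(M)$, part~(a) gives $\End_{\okk G}(\okk\otimes_{\kk}M)\cong\okk\otimes_{\kk}E$, and $\Rad_{\kk G}(M)=\Rad(E)$ by definition. For any finite dimensional $\kk$-algebra $E$, one has $\okk\otimes_{\kk}\Rad(E)\subseteq\Rad(\okk\otimes_{\kk}E)$: indeed $\okk\otimes_{\kk}\Rad(E)$ is a nilpotent two-sided ideal of $\okk\otimes_{\kk}E$ (nilpotence of $\Rad(E)$ is preserved by the base change, since $(\okk\otimes\Rad(E))^n=\okk\otimes\Rad(E)^n$), hence contained in the radical. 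This gives the asserted inclusion; I would not belabor the point that the inclusion can be strict when $\kk$ is not perfect, since only ``$\subseteq$'' is claimed.

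For part~(c), assume $M,N$ indecomposable. If $M\simeq N$ then trivially $\okk\otimes_{\kk}M$ and $\okk\otimes_{\kk}N$ share a common summand. Conversely, a nonzero common direct summand of $\okk\otimes M$ and $\okk\otimes N$ produces, via the Krull--Schmidt theorem over $\okk G$, a nonzero composite $\okk\otimes M\to\okk\otimes N\to\okk\otimes M$ which is an idempotent (up to the usual adjustment), in particular \emph{not} in $\Rad_{\okk G}(\okk\otimes M)$. By part~(a) the hom-groups are the base changes of the $\kk G$-hom-groups, so there exist $\kk G$-maps $f\colon M\to N$ and $h\colon N\to M$ with $hf\notin\Rad_{\kk G}(M)$ — here I use that by part~(b), or rather its consequence $\Rad_{\okk G}(\okk\otimes M)\cap\big(\okk\otimes\End_{\kk G}(M)\big)$ meets $\okk\otimes\Rad_{\kk G}(M)$ appropriately; more cleanly, since $\End_{\kk G}(M)$ is local (as $M$ is indecomposable), any element of $\End_{\kk G}(M)\setminus\Rad_{\kk G}(M)$ is invertible, so $hf$ is an automorphism of $M$, whence $f$ is a split injection and $M$ is a summand of $N$; indecomposability of $N$ forces $M\simeq N$. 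The one subtlety is producing the $\kk$-rational maps $f,h$ from the $\okk$-linear data: I would spell out that writing the $\okk$-idempotent in a $\kk$-basis of $\Hom_{\kk G}(M,N)$ and of $\Hom_{\kk G}(N,M)$, at least one pair of coordinate maps has composite outside the radical, because the radical is a $\kk$-subspace and the full composite lies outside $\Rad_{\okk G}$.

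Part~(d) is the special case of part~(c) with $N=\kk$ the trivial module, which is indecomposable, giving $\kk$ as a summand of $M$ directly. The main obstacle in the whole proposition is the rationality argument in part~(c) — passing from a common summand over $\okk$ back to an honest isomorphism over $\kk$ — but the locality of $\End_{\kk G}(M)$ for indecomposable $M$ together with part~(a) makes this routine rather than delicate.
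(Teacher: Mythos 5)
Your proof is correct and follows essentially the same route as the paper's: flat base change of $\Hom$ for (a), nilpotence of $\okk\otimes_{\kk}\Rad_{\kk G}(M)$ for (b), and for (c) the same rationality step of writing the $\okk$-linear maps as $\sum_i a_i\otimes f_i$, $\sum_j b_j\otimes g_j$ and using that $\End_{\kk G}(M)$ is local, merely organized contrapositively (some coordinate composite $g_jf_i$ lies outside the radical, hence is an automorphism, so $M\simeq N$) instead of the paper's contradiction (if $M\not\simeq N$ all composites lie in the radical, so the idempotent $gf$ is nilpotent, hence zero). One minor point: (d) is not literally a special case of (c), since $M$ there need not be indecomposable; as in the paper, one should first use Krull--Schmidt to replace $M$ by an indecomposable summand $M_i$ with $\okk$ a summand of $\okk\otimes_{\kk}M_i$, and then apply (c) with $N=\kk$.
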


\begin{proof}
The canonical morphism $\okk \otimes \Hom_{\kk G}(M, N) \too \Hom_{\okk G}(\okk \otimes M, \okk\otimes N)$, between left exact functors in~$M$ (for~$N$ fixed) is an isomorphism when $M=\kk G$, hence also for every finitely presented $\kk G$-module~$M$. This gives~\eqref{eq:okk}. For~(b), it suffices to observe that $\okk\otimes_{\kk}\Rad_{\kk G}(M)$ is a nilpotent two-sided ideal of the ring $\okk \otimes_{\kk} \End_{\kk G}(M)$, which is isomorphic to the ring~$\End_{\okk G}(\okk \otimes_{\kk} M)$ by~(a). See~\cite[Thm.\,5.14]{Lam91} if necessary. For~(c), assume that $M \not\simeq N$ and suppose that $U$ is a
direct summand of both $\okk \otimes M$ and $\okk \otimes N$.
Then there exist homomorphisms $f: \okk \otimes M \to \okk \otimes N$ and
$g : \okk \otimes N \to \okk \otimes M$ such that $gf$ is an idempotent
endomorphism of~$\okk\otimes M$ with image isomorphic to~$U$.
By~\eqref{eq:okk}, $f=\sum_{i=1}^m a_i\otimes f_i$ and
$g=\sum_{j=1}^n b_j\otimes g_j$ for some $a_i,b_j\in \okk$ and
$f_i:M\to N$ and $g_j:N\to M$. As $M\not\simeq N$, all compositions
$g_jf_i:M\to M$ belong to the radical since they factor through~$N$.
Because $M$ is finite-dimensional, the radical of
$\Hom_{\kk G}(M,M)$ is nilpotent. So there exists an integer~$\ell$
such that $(gf)^\ell=0$. But $gf$ is idempotent, so~$gf=0$ and
therefore $U\simeq\textrm{im}(gf)=0$.
For~(d), we can assume~$M$ indecomposable. Then~(d) follows
from~(c) with~$N=\kk$.
\end{proof}

\begin{Rem}
Another tool in our discussion of~$\Rat$ is rigidity. Recall that a tensor category~$\cat C$ is \emph{rigid} if there exists a `dual' $(-)^\vee:\cat C\op\to \cat C$
such that every $M\in\cat C$ induces an adjunction
\[
\xymatrix@R=2em
{\cat C \ar@/_1em/[d]_-{M\otimes-}
\\
\cat C \ar@/_1em/[u]_-{M^\vee\otimes-}
}
\]
This holds for instance for $\cat C=\kk G\mmod$
or for $\cat C=\kk G\sstab$ with
$M^\vee=\Hom_{\kk}(M,\kk)$ with the usual $G$-module structure
$(g\cdot f)(m)=f(g\inv m)$.
The above adjunction comes with a unit $\eta_M:\unit\to M^\vee\otimes M$ and a counit $\eps_M:M\otimes M^\vee\to \unit$, which in our example are respectively the $\kk$-linear map $\kk \to M^\vee\otimes M\simeq \End_{\kk}(M)$ mapping $1\in\kk$ to the identity of~$M$, and the $\kk$-linear map given by the swap of factors followed by the trace: $M\otimes M^\vee\simeq M^\vee\otimes M\simeq \End_{\kk}(M)\otoo{\tr}\kk$.
\end{Rem}

Rigidity allows us to isolate a critical property of a module, which
is at the heart of the distinction between~$\Rad$ and~$\Rat$.

\begin{Def}
\label{def:xfaithful}%
A finitely generated $\kk G$-module~$M$
is said to be \emph{$\otimes$-faithful} provided the
functor $M\otimes-:\kk G\sstab\to \kk G\sstab$ is faithful.
\end{Def}

\begin{Rem}
We use the stable category, not the ordinary category, for the above
simple definition. In $\kk G\mmod$, every non-zero~$M$ induces a
faithful functor~$M\otimes-$. We are nevertheless going to give
several equivalent formulations in $\kk G\mmod$. Note that a projective
$\kk G$-module~$P$ is never $\otimes$-faithful, since $P=0$ in~$\kk G\sstab$.
\end{Rem}

\begin{Exa}
\label{exa:pdiv}%
A $\kk G$-module~$M$ such
that $\dim(M)$ is prime to~$p=\chara(\kk)$ is $\otimes$-faithful since
$\eta_M:\unit\to M^\vee\otimes M$ is split by
$\dim(M)\inv\cdot\tr:M^\vee\otimes M\to \kk$.
A converse holds when $\kk$ is algebraically closed,
as we recall in Theorem~\ref{thm:tracemap} below.
\end{Exa}

\begin{Prop}
\label{prop:xfaithful}%
Let $M$ be a finitely generated $\kk G$-module. The following properties are equivalent:
\begin{enumerate}[\rm(i)]
\smallbreak
\item
\label{it:x-1}%
The $\kk G$-module $M$ is $\otimes$-faithful (Definition~\ref{def:xfaithful}).
\smallbreak
\item
\label{it:x-1'}%
Some indecomposable summand of~$M$ is $\otimes$-faithful.
\smallbreak
\item
\label{it:x-X}%
There exists a finitely generated $\kk G$-module~$X$ such that
$X\otimes M$ is~$\otimes$-faithful.
\smallbreak
\item
\label{it:x-3}%
The unit $\eta:\kk \to M^\vee \otimes M$ is a split monomorphism of
$\kk G$-modules.
\smallbreak
\item
\label{it:x-4}%
The unit $\eta:\kk \to M^\vee \otimes M$ is a split monomorphism
in the stable category~$\kk G\sstab$.
\smallbreak
\item
\label{it:x-3'4'}%
The trace $\tr:M^\vee \otimes M\to \kk$ (or equivalently the
counit $\eps_M:M\otimes M^\vee\to \kk$) is a split epimorphism
of $\kk G$-modules, or equivalently in the stable category.
\smallbreak
\item
\label{it:pdiv2}%
$\kk$ is a direct summand of $M^\vee \otimes M$ in~$\kk G\mmod$,
or equivalently in~$\kk G\sstab$.
\smallbreak
\item
\label{it:x-X2}%
$\kk$ is a direct summand of $X\otimes M$ for some finitely
generated $\kk G$-module~$X$.
\end{enumerate}
\medbreak
\noindent
If $\okk$ is an algebraic closure of~$\kk$, then the above are further
equivalent to:
\begin{enumerate}[\rm(i)]
\setcounter{enumi}{8}
\smallbreak
\item
\label{it:x-okk}%
The $\okk G$-module $\okk\otimes_{\kk} M$ is $\otimes$-faithful.
\smallbreak
\item
\label{it:pdiv3}%
Some direct summand of $\okk \otimes_\kk M$
has dimension that is not divisible by~$p$.
\end{enumerate}
\end{Prop}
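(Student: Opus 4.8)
The plan is to prove the chain of equivalences by establishing enough implications to connect all ten statements, organizing the argument around the rigidity adjunction and the dimension criterion. First I would treat the ``local'' equivalences that do not involve~$\okk$. The implications \eqref{it:x-3}$\Rightarrow$\eqref{it:x-4} and \eqref{it:x-3'4'}$\Rightarrow$\eqref{it:pdiv2} (stable version)$\Rightarrow$\eqref{it:x-X2} are trivial or follow by applying the quotient functor $\kk G\mmod\to\kk G\sstab$. The equivalences \eqref{it:x-3}$\Leftrightarrow$\eqref{it:x-3'4'}$\Leftrightarrow$\eqref{it:pdiv2} in $\kk G\mmod$ are formal consequences of the self-duality $(M^\vee\otimes M)^\vee\simeq M^\vee\otimes M$ carrying $\eta$ to $\tr$ (up to the swap), together with the fact that in a Krull--Schmidt category a map $\kk\to V$ is split mono iff $\kk$ is a summand of~$V$ iff the dual map $V^\vee\to\kk$ is split epi. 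For \eqref{it:x-4}$\Rightarrow$\eqref{it:x-1}: if $\eta\colon\unit\to M^\vee\otimes M$ is split mono in $\kk G\sstab$, then for any morphism $f$ in $\kk G\sstab$, applying $M\otimes-$ and using that $M\otimes\eta$ is split mono shows $M\otimes f=0\Rightarrow f=0$ after tensoring once more with $M^\vee$ and using the triangle identities; so $M\otimes-$ is faithful.

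Next I would close the loop \eqref{it:x-1}$\Rightarrow$\eqref{it:x-1'}$\Rightarrow$\eqref{it:x-X}$\Rightarrow$\eqref{it:x-X2} and \eqref{it:x-X2}$\Rightarrow$\eqref{it:x-4}. For \eqref{it:x-1}$\Rightarrow$\eqref{it:x-1'}: write $M=\bigoplus M_i$ into indecomposables; if no summand is $\otimes$-faithful, then for each~$i$ there is a nonzero $f_i$ in $\kk G\sstab$ with $M_i\otimes f_i=0$; one then has to produce a single nonzero morphism killed by $M\otimes-$. Here I would use property \eqref{it:pdiv3} as an intermediary once it is available, or argue directly: actually the cleanest route is to first prove \eqref{it:x-1}$\Leftrightarrow$\eqref{it:pdiv2} directly (a module kills nothing under tensoring iff its ``coevaluation'' is split, a standard rigidity fact: $M\otimes-$ faithful $\Leftrightarrow$ $\eta_M$ split mono in $\kk G\sstab$ $\Leftrightarrow$ $\unit$ is a retract of $M^\vee\otimes M$ in $\kk G\sstab$, and the stable and module versions agree because $\kk$ has no projective summands so a splitting in $\kk G\sstab$ lifts). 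Given \eqref{it:x-1}$\Leftrightarrow$\eqref{it:pdiv2}, statements \eqref{it:x-1'} and \eqref{it:x-X} and \eqref{it:x-X2} fall out: \eqref{it:x-X2}$\Rightarrow$\eqref{it:pdiv2} because if $\kk\mid X\otimes M$ then applying $M^\vee\otimes-$ and using $\kk\mid M^\vee\otimes M$... no — rather, $\kk\mid X\otimes M$ gives a split mono $\unit\to X\otimes M$, tensor with $M^\vee$ and postcompose with $\tr\otimes M$ ... the point is $\unit$ becomes a retract of $M^\vee\otimes X\otimes M$, and one extracts a retraction of $\eta_M$ using the triangle identity. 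And \eqref{it:x-1'}$\Rightarrow$\eqref{it:x-X} with $X=$ complementary summand times nothing, i.e. take $X$ such that $X\otimes(\text{that summand})$... simpler: \eqref{it:x-1'}$\Rightarrow$\eqref{it:x-1} since if a summand $M_i$ of $M$ is $\otimes$-faithful and $M\otimes f=0$ then $M_i\otimes f=0$ so $f=0$.

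Finally, the $\okk$-equivalences. For \eqref{it:pdiv2}$\Leftrightarrow$\eqref{it:x-okk}: extension of scalars $\okk\otimes_\kk-$ is a tensor functor commuting with $(-)^\vee$, and by Proposition~\ref{prop:okk}\eqref{it:ksummand} the trivial module $\okk$ is a summand of $\okk\otimes_\kk(M^\vee\otimes M)\simeq(\okk\otimes M)^\vee\otimes(\okk\otimes M)$ iff $\kk$ is a summand of $M^\vee\otimes M$; then apply the already-proven equivalence \eqref{it:x-1}$\Leftrightarrow$\eqref{it:pdiv2} over the field~$\okk$. For \eqref{it:x-okk}$\Leftrightarrow$\eqref{it:pdiv3}: over the algebraically closed field $\okk$ this is exactly the cited Theorem~\ref{thm:tracemap} (the converse to Example~\ref{exa:pdiv}); one direction is Example~\ref{exa:pdiv} applied summand-wise, namely a $p$-prime-dimensional summand $U$ of $\okk\otimes M$ is $\otimes$-faithful hence so is $\okk\otimes M$ by \eqref{it:x-1'}, and the other direction is the quoted theorem.

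I expect the main obstacle to be the core rigidity lemma that $M\otimes-$ is faithful on $\kk G\sstab$ \emph{if and only if} $\eta_M$ is a split monomorphism there — in particular the ``only if'' direction, where from faithfulness alone one must manufacture a splitting of $\eta_M$. The trick is to feed the functor $M\otimes-$ the right morphism: by a standard argument (cf. dualizable objects in tensor-triangulated categories) one considers the natural transformation $\eta_M\otimes(-)$ and shows that if it is not split then one can find a nonzero $f$ with $M\otimes f=0$, using that $\unit$ is a retract of $M^\vee\otimes M$ is equivalent to the "trace form" being surjective; making this precise in the additive (non-abelian) stable category, and correctly invoking Krull--Schmidt to pass between ``split mono'' and ``is a direct summand'', is the delicate point. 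Everything else is bookkeeping with the triangle identities and Proposition~\ref{prop:okk}.
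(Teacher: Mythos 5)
Most of your architecture is fine and agrees with the paper where it matters at the edges: the easy implications, the reduction of \eqref{it:pdiv2} over $\kk$ to \eqref{it:pdiv2} over $\okk$ via Proposition~\ref{prop:okk}\,\eqref{it:ksummand}, and the use of Example~\ref{exa:pdiv} together with Theorem~\ref{thm:tracemap} for \eqref{it:x-okk}$\Leftrightarrow$\eqref{it:pdiv3} are exactly what is needed. The genuine gap is at the step you yourself flag as the main obstacle: the implication \eqref{it:x-1}$\Rightarrow$\eqref{it:x-3} (faithfulness of $M\otimes-$ forces the unit $\eta_M$ to split). Your sketch --- ``consider $\eta_M\otimes(-)$ and show that if it is not split one can find a nonzero $f$ with $M\otimes f=0$'' --- is precisely the statement to be proved; no candidate morphism $f$ and no mechanism for producing it are given, and the appeal to a ``standard argument for dualizable objects'' or to surjectivity of the trace form does not supply one. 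This implication is \emph{not} a formal consequence of rigidity in an additive category: in $\kk G\mmod$ every nonzero module gives a faithful functor $M\otimes-$ while $\eta_M$ almost never splits, so any correct proof must genuinely use the triangulated structure of $\kk G\sstab$, which your sketch never invokes.

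The missing idea (the paper's argument) is to complete $\eta$ to an exact triangle $N\oto{\xi}\unit\oto{\eta}M^\vee\otimes M\to\Sigma N$ in $\kk G\sstab$. The unit--counit identity makes $M\otimes\eta$ a split monomorphism, hence $M\otimes\xi=0$; faithfulness of $M\otimes-$ gives $\xi=0$; and an exact triangle containing a zero map splits (see~\cite[Cor.\,1.2.7]{Neeman01}), so $\eta$ is a split mono in the stable category. Krull--Schmidt, together with the fact that $\kk$ is not projective (as $p$ divides $|G|$), upgrades this stable splitting to a splitting in $\kk G\mmod$, giving \eqref{it:x-3}; the same triangle argument applied to the counit gives \eqref{it:x-3'4'}. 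Once this lemma is in place, your rerouting of \eqref{it:x-1'}, \eqref{it:x-X} and \eqref{it:x-X2} through \eqref{it:pdiv2} does work; note, though, that your attempted direct proof of \eqref{it:x-X2}$\Rightarrow$\eqref{it:pdiv2}, which stalls mid-sentence, is unnecessary: if $\unit$ is a summand of $X\otimes M$ then $(X\otimes M)\otimes-$ contains the identity functor as a direct summand and is therefore faithful, so \eqref{it:x-X}$\Rightarrow$\eqref{it:x-1} closes the loop immediately, exactly as in the paper.
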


\begin{proof}
Recall that $\kk G\mmod$ and $\kk G\sstab$ are Krull-Schmidt categories
and that $M$ has the same indecomposable summands in both, except for
the projectives which vanish stably. Hence, the two formulations
of~\eqref{it:pdiv2} are indeed equivalent (and~\eqref{it:x-X2} is unambiguous).
It is straightforward to check
\eqref{it:x-1}$\iff$\eqref{it:x-1'}$\iff$\eqref{it:x-X} from
Definition~\ref{def:xfaithful}. Also obvious are
\eqref{it:x-3}\then\eqref{it:x-4}\then\eqref{it:pdiv2}\then\eqref{it:x-X2}\then\eqref{it:x-1}.

Let us show that \eqref{it:x-1}\then\eqref{it:x-3}.
As $\cat C=\kk G\sstab$ is a rigid tensor-\emph{triangulated} category, we
can choose an exact triangle $N\oto{\xi}\unit\oto{\eta}M^\vee\otimes M\to\Sigma N$ in~$\cat C$.
The unit-counit relation
shows that $M\otimes\eta$ is a split monomorphism. Hence,
$M\otimes \xi=0$ in~$\cat C$, and $\xi=0$ since
$M\otimes-:\cat C\to \cat C$ is assumed faithful.
Consequently,  $\eta$ is a split monomorphism, by a standard property of
triangulated categories, see~\cite[Cor.\,1.2.7]{Neeman01}.

Similarly, \eqref{it:x-3'4'}\then\eqref{it:x-1} is trivial and \eqref{it:x-1}\then\eqref{it:x-3'4'} is proven as above.

At this stage, we know that \eqref{it:x-1}--\eqref{it:x-X2} are all equivalent. Now, property~\eqref{it:pdiv2} holds for~$M$ over~$\kk$ if and if it holds for~$\okk\otimes_\kk M$ over~$\okk$ by Proposition~\ref{prop:okk}\,\eqref{it:ksummand}. Hence, \eqref{it:x-1}--\eqref{it:x-X2} are also equivalent to~\eqref{it:x-okk}. We already saw that~\eqref{it:pdiv3}\then\eqref{it:x-okk} in Example~\ref{exa:pdiv}. The converse, \eqref{it:x-okk}\then\eqref{it:pdiv3}, holds by the following more general theorem of
Dave Benson and the second author (applied in the case $\kk=\okk$).
\end{proof}

\begin{Thm}[{\cite{BensonCarlson86}}] \label{thm:tracemap}
Suppose that $M$ and $N$ are absolutely
indecomposable $\kk G$-modules (\ie remain indecomposable
over the algebraic closure)
and suppose that the trivial module $\kk$
is a direct summand of $M \otimes N$.
Then $\dim(M)$ is not divisible by~$p$, $N \simeq M^\vee$, the
multiplicity
of $\kk$ as direct summand of $M \otimes N$ is one, the unit
$\eta_M:\kk\to M^\vee\otimes M$ (mapping 1 to $1_M$) is a split
monomorphism and the trace
$\tr:M^\vee \otimes M \to \kk$ is a split epimorphism.
\end{Thm}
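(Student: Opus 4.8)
The plan is to reduce everything to a statement about the trace map and the structure of $\End_{\kk G}(M)$, exploiting absolute indecomposability to pass freely to the algebraic closure. First I would observe that by Proposition~\ref{prop:okk}\,(a) the hypothesis and all conclusions are insensitive to extension of scalars from $\kk$ to $\okk$ (extension of scalars is a faithful tensor functor, it preserves indecomposable summands by absolute indecomposability, and it commutes with $(-)^\vee$, $\tr$, $\eta$); so I may assume $\kk=\okk$ algebraically closed. Over $\okk$, the endomorphism ring $\End_{\kk G}(M)$ is local with residue field $\okk$, and likewise for $N$. The key elementary computation is then: for $f\colon M\to N$ and $g\colon N\to M$, the scalar by which the composite $\kk\overset{\eta_M}{\to}M^\vee\otimes M\overset{g\otimes 1 \text{ (suitably placed)}}{\to}\cdots\to \kk$ acts is, up to the canonical identifications $M^\vee\otimes M\simeq \End_\kk(M)$, nothing but $\operatorname{tr}(gf)\in\kk$ (ordinary trace of the $\kk$-linear endomorphism $gf$ of $M$). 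Thus $\kk$ being a direct summand of $M\otimes N$ with multiplicity $m$ amounts to finding $f_i\colon M\to N$, $g_i\colon N\to M$ with the matrix $(\operatorname{tr}(g_jf_i))_{ij}$ invertible of rank $m$ — I would make this precise by noting $\Hom_{\kk G}(M\otimes N,\kk)\simeq \Hom_{\kk G}(M,N^\vee)$ and $\Hom_{\kk G}(\kk,M\otimes N)\simeq \Hom_{\kk G}(N^\vee,M)$, and that the composite $\kk\to M\otimes N\to \kk$ realizing the summand is exactly evaluation of a trace form.

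Next I would handle the case $N\not\simeq M^\vee$: then every $f\colon M\to N$, viewed after identification as a map $M\to N^{\vee\vee}$ — equivalently every $g\colon N^\vee \to M$... more straightforwardly, $\Hom_{\kk G}(M, N^\vee)=\Hom_{\kk G}(N^\vee,M)=0$ is too strong, so instead: if $\kk$ is a summand of $M\otimes N$ then $\Hom_{\kk G}(\kk, M\otimes N)\ne 0$ and $\Hom_{\kk G}(M\otimes N,\kk)\ne 0$, i.e. $\Hom_{\kk G}(N^\vee, M)\ne 0\ne \Hom_{\kk G}(M,N^\vee)$. Since $M$ and $N^\vee$ are both (absolutely) indecomposable with local endomorphism rings, a nonzero $g\colon N^\vee\to M$ and nonzero $f\colon M\to N^\vee$ with $fg$ or $gf$ a \emph{unit} would force $M\simeq N^\vee$; and if no such composite is a unit, then all composites $gf$ lie in the maximal ideal $\Rad(\End_{\kk G}(M))$, which is nilpotent because $M$ is finite-dimensional. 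That nilpotence is exactly what kills the idempotent: the projector $M\otimes N\to\kk\hookrightarrow M\otimes N$ onto the putative summand $\kk^m$ is built from such $f_i,g_j$, its relevant Gram matrix $(\operatorname{tr}(g_jf_i))$ has entries that are traces of nilpotent... no — rather, the composite endomorphism of $M$ through $N$ is nilpotent, forcing the idempotent to vanish, hence $m=0$. So $N\simeq M^\vee$, and I rename $N=M^\vee$.

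Finally, with $N=M^\vee$, the relevant pairing becomes the trace form $\operatorname{tr}(g f)$ on pairs $f,g\in\End_{\kk G}(M)$ (after identifications), and the multiplicity of $\kk$ in $M^\vee\otimes M$ equals the rank of the bilinear form $(f,g)\mapsto \operatorname{tr}_{\kk}(gf)$ on the \emph{algebra} $\End_{\kk G}(M)$ — this is just the radical of the trace form of the local $\okk$-algebra $E:=\End_{\kk G}(M)$. Because $E$ is local, $\operatorname{tr}_\kk$ restricted to the Jacobson radical $\Rad(E)$, and more to the point the bilinear form, is degenerate off the $1$-dimensional ``scalar'' part unless $\operatorname{tr}_\kk(1_M)=\dim_\kk(M)\neq 0$ in $\kk$; one checks that the form is nonzero iff $\dim_\kk M$ is prime to $p$, in which case its rank is exactly $1$ (the form descends to the residue field $E/\Rad(E)=\okk$, where it is $\lambda\mapsto \dim(M)\lambda^2$, nondegenerate of rank one), and moreover $\eta_M$ is then split by $\dim(M)^{-1}\operatorname{tr}$ and $\operatorname{tr}$ is a split epimorphism, as in Example~\ref{exa:pdiv}. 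So: $p\nmid\dim M$, the multiplicity is one, $N\simeq M^\vee$, and the splittings are the stated ones.

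The main obstacle I expect is the bookkeeping in the $N\not\simeq M^\vee$ step — making rigorous that an idempotent endomorphism of $M$ that factors as $M\to N\to\cdots\to M$ through an object non-dual to $M$ necessarily lands in the nilpotent radical of $\End_{\kk G}(M)$ (or of $\End_{\kk G}(N)$), and hence vanishes. The cleanest route is the one already used in the proof of Proposition~\ref{prop:okk}\,(c): any self-map of $M$ factoring through an indecomposable $N\not\simeq M$ lies in $\Rad_{\kk G}(M)$, which is nilpotent, so an idempotent so factoring is $0$; applying this with $M$ replaced by $M$ and the factoring object being (a summand of) $M\otimes N$'s ``$N$-part'' requires identifying $M\otimes M^\vee$-type maps correctly, but no genuinely new idea. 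Everything else is the linear algebra of the trace form on a finite-dimensional local algebra, which is routine.
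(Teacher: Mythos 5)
A preliminary remark on the comparison you are being measured against: the paper does not prove Theorem~\ref{thm:tracemap} at all; it is quoted from Benson--Carlson \cite{BensonCarlson86}. Your proposal is, in substance, a correct reconstruction of the standard trace-form argument for that theorem. The skeleton is sound: reduce to $\kk=\okk$ using absolute indecomposability and Proposition~\ref{prop:okk} (note the descent of the conclusions back to $\kk$ uses parts (c) and (d) of that proposition, not only part (a) which you cite); identify the multiplicity of $\kk$ in $M\otimes N$ with the rank of the composition pairing $\Hom_{\kk G}(M\otimes N,\kk)\times\Hom_{\kk G}(\kk,M\otimes N)\to\kk$, which under adjunction becomes $(f,g)\mapsto\tr(gf)$ for $f\in\Hom_{\kk G}(M,N^\vee)$ and $g\in\Hom_{\kk G}(N^\vee,M)$ (the ``multiplicity equals rank'' step needs only the easy observation that any composite $\kk\to Y\to\kk$ through a module $Y$ with no trivial summand is zero); if $N\not\simeq M^\vee$ then every $gf$ is a non-automorphism of the indecomposable $M$, hence lies in $\Rad(\End_{\kk G}(M))$, is nilpotent, and has trace zero, so the rank is $0$, contradicting the hypothesis; if $N\simeq M^\vee$ the pairing is the trace form on $E=\End_{\kk G}(M)$, which annihilates $\Rad(E)$ (for $n\in\Rad(E)$ and any $f$, the product $nf$ lies in $\Rad(E)$ and is nilpotent), hence descends to $E/\Rad(E)=\okk$ as $(\lambda,\mu)\mapsto\lambda\mu\dim(M)$, of rank $1$ if $p\nmid\dim(M)$ and $0$ otherwise; this gives $p\nmid\dim(M)$, multiplicity one, and the splittings of $\eta_M$ and $\tr$ as in Example~\ref{exa:pdiv}.

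Two small repairs to your write-up. First, the maps you pair should be $M\to N^\vee$ and $N^\vee\to M$ (or, symmetrically, $M^\vee\to N$ and $N\to M^\vee$), not $M\to N$ and $N\to M$; this is what the adjunctions you quote actually produce. Second, in the case $N\not\simeq M^\vee$ your final formulation (``the composite endomorphism of $M$ through $N$ is nilpotent, forcing the idempotent to vanish'') is the awkward one: the idempotent lives on $M\otimes N$, not on $M$, and transporting it there is exactly the bookkeeping you worry about in your last paragraph. Your first instinct in that sentence was the right one and needs no idempotent at all: every entry of the Gram matrix of the pairing is the trace of an element of the nilpotent radical, hence zero, so the rank --- that is, the multiplicity --- vanishes. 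With these adjustments the argument is complete and agrees with the Benson--Carlson proof the paper invokes.
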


\begin{Rem}\label{rem:notalgclosed}
The reason for the assumption of absolute indecomposability is
illustrated in an easy example. Let $G = \langle x \rangle \simeq C_3$, be
a cyclic group of order~3, and $\kk = \mathbb{F}_2$, the prime field
with two elements. The algebra $\kk G$ is a semisimple and as a module
over itself it decomposes $\kk G \simeq \kk \oplus M$, where
$M$ has dimension~2.
If a cube root of unity $\zeta$ is adjoined to $\kk$, then $M$ splits as a
sum of two one-dimensional modules on which $x$ acts by multiplication
by $\zeta$ on one and by $\zeta^2$ on the other.
Then it is not difficult to see
that $M \otimes M \simeq \kk \oplus \kk \oplus M$, since $x$ has an
eigenspace, with eigenvalue one, of dimension~2 on the tensor product.
Of course, in this example, the characteristic of the field does not divide
the order of the group. Another example can be constructed by inflating
this module to $\kk G$ where $G$ is the alternating group $A_4$, along the
map $G \to C_3$ whose kernel is the Sylow 2-subgroup. More complicated
examples also exist.
\end{Rem}

The next lemma is a corollary of the multiplicity-one property in Theorem~\ref{thm:tracemap}.
\begin{Lem}
\label{lem:traces}%
Assume that $\kk = \okk$ is algebraically closed.
Let $M$ be an indecomposable $\kk G$-module of dimension prime to~$p$.
Let $j:\kk \into M^\vee\otimes M$ be any split monomorphism and
$q:M^\vee\otimes M\onto \kk$ any split epimorphism. Then:
\begin{enumerate}[\rm(a)]
\item
The composite $q\circ j:\kk \to \kk$ is not the zero map.
\smallbreak
\item
Let $f:M\to M$ be any morphism. Then the composite
$q(1\otimes f)j:\kk \to M^\vee\otimes M\otoo{1\otimes f}
M^\vee\otimes M\to \kk$ is a non-zero multiple of the trace of~$f$.
\end{enumerate}
\end{Lem}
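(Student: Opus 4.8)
The plan is to deduce both statements from the multiplicity-one part of Theorem~\ref{thm:tracemap}, using the canonical unit $\eta=\eta_M:\kk\to M^\vee\otimes M$ and trace $\tr=\tr_M:M^\vee\otimes M\to \kk$ as reference maps. Since $\kk=\okk$ and $M$ is indecomposable of dimension prime to~$p$, Example~\ref{exa:pdiv} tells us $M$ is $\otimes$-faithful, so by Theorem~\ref{thm:tracemap} (with $N=M^\vee$) the trivial module~$\kk$ occurs in $M^\vee\otimes M$ with multiplicity exactly one, $\eta$ is a split monomorphism, and $\tr$ is a split epimorphism, with $\tr\circ\eta=\dim(M)\cdot 1_\kk\ne 0$ in~$\kk$. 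The key structural fact is that, because $\kk$ appears with multiplicity one, the $\kk$-isotypic summand of $M^\vee\otimes M$ (as a $\kk G$-module) is a single copy of~$\kk$; hence \emph{any} split monomorphism $j:\kk\into M^\vee\otimes M$ has image equal to that unique copy, and therefore $j=c\cdot\eta$ for some $c\in\kk^\times$, and likewise any split epimorphism $q:M^\vee\otimes M\onto \kk$ satisfies $q=d\cdot\tr$ for some $d\in\kk^\times$ (the other indecomposable summands are killed by any map to~$\kk$, and on the $\kk$-summand $q$ must be a nonzero scalar since it is split epi).

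For part~(a), with $j=c\,\eta$ and $q=d\,\tr$ we get $q\circ j=cd\cdot(\tr\circ\eta)=cd\cdot\dim(M)\cdot 1_\kk$, which is nonzero since $c,d\in\kk^\times$ and $\dim(M)$ is prime to~$p$. For part~(b), write $j=c\,\eta$, $q=d\,\tr$ as before, so $q(1\otimes f)j = cd\cdot \tr\circ(1\otimes f)\circ \eta$. It remains to identify $\tr\circ(1_{M^\vee}\otimes f)\circ\eta:\kk\to\kk$ with the trace of~$f$. This is the standard categorical computation: $\eta$ sends $1$ to $\id_M\in M^\vee\otimes M\simeq \End_\kk(M)$, then $1_{M^\vee}\otimes f$ post-composes with~$f$ to give (the element corresponding to)~$f$ itself in $\End_\kk(M)$, and $\tr$ reads off the ordinary trace $\tr_\kk(f)\in\kk$. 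Thus $q(1\otimes f)j = cd\cdot\tr_\kk(f)$, a fixed nonzero multiple of the trace, as claimed.

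I would carry out the argument in that order: first record the multiplicity-one consequence of Theorem~\ref{thm:tracemap}; then prove the rigidity classification of split mono's into and split epi's out of $M^\vee\otimes M$ (the lemma's real content); then plug in for (a) and (b). The main obstacle is the middle step — verifying that multiplicity one genuinely forces $j$ and $q$ to be scalar multiples of $\eta$ and $\tr$. One must be slightly careful that "the $\kk$-isotypic part is one copy of $\kk$" uses the Krull--Schmidt uniqueness together with $\kk$ being simple (so $\Hom_{\kk G}(\kk, M^\vee\otimes M)$ is one-dimensional, and dually for $\Hom_{\kk G}(M^\vee\otimes M,\kk)$), and that a split mono from a simple module with one-dimensional Hom-space must be a scalar multiple of any fixed one. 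The compatibility $\tr\circ(1\otimes f)\circ\eta=\tr_\kk(f)$ in part~(b) is routine from the definitions of $\eta$ and $\tr$ recalled just before Definition~\ref{def:xfaithful}, so I would not belabor it.
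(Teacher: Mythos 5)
There is a genuine gap at exactly the step you flag as the main obstacle. Your classification claims that any split monomorphism $j:\kk\into M^\vee\otimes M$ must be a scalar multiple of~$\eta$ and any split epimorphism $q$ a scalar multiple of~$\tr$, on the grounds that $\Hom_{\kk G}(\kk,M^\vee\otimes M)$ and $\Hom_{\kk G}(M^\vee\otimes M,\kk)$ are one-dimensional. That is false in general: by adjunction both spaces are isomorphic to $\End_{\kk G}(M)$, which for an indecomposable $M$ is local but usually of dimension $>1$. Multiplicity one of $\kk$ as a \emph{direct summand} does not prevent the complementary summand $L$ (where $M^\vee\otimes M\simeq\kk\oplus L$) from having trivial submodules or trivial quotients. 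Concretely, for $G=C_p$ with $p\ge 5$ and $M=[2]$, Formula~\eqref{eq:i@j} gives $M^\vee\otimes M\simeq[1]\oplus[3]$, and $[3]$ admits nonzero maps both from and to~$\kk$; so $j=\eta+j'$ with $0\neq j':\kk\to[3]$ is a split monomorphism that is not proportional to~$\eta$, and similarly for~$q$. Hence the identities $q\circ j=cd\cdot\dim(M)$ and $q(1\otimes f)j=cd\cdot\tr(f)$ do not follow as you wrote them: there are cross terms you have silently discarded.

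The conclusion is nevertheless correct, and the repair is what the paper does. Since $L$ contains no summand isomorphic to~$\kk$, one can only say $j=c\,\eta+j'$ and $q=d\,\tr+q'$ with $c,d\in\kk^\times$ and $j'$, $q'$ factoring through~$L$, hence lying in the Kelly radical. Expanding $q(1\otimes f)j$ in $\End_{\kk G}(\kk)\simeq\kk$, every term containing $j'$ or $q'$ lies in $\Rad(\End(\kk))=0$ because the radical is an ideal of morphisms; so the composite equals $cd\cdot\tr\circ(1\otimes f)\circ\eta$, which is $cd$ times the trace of~$f$ (your identification of that canonical composite with $\tr_\kk(f)$ is fine), and (a) follows by taking $f=1_M$, using that $\dim(M)$ is prime to~$p$. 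So your outer steps stand, but the middle step must be replaced by this radical argument rather than a (false) uniqueness claim for $j$ and~$q$.
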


\begin{proof}
  Since $M^\vee\otimes M\simeq \kk\oplus L$ where $L$ contains no $\kk$
  summand, the split morphisms~$j$ and $q$ must be respectively a
  non-zero multiple of the (canonical) split morphisms
  $\eta:\kk \into M^\vee\otimes M$ and $\tr:M^\vee\otimes M\onto \kk$,
  plus morphisms factoring through~$L$, which are in particular
  in the Kelly radical. Computing the composite in~(b), using
  that $\Rad(\kk)=0$, we see that $q(1\otimes f)j$ is a
  non-zero multiple of
\[
\kk \otoo{\eta} M^\vee\otimes M \otoo{1\otimes f}M^\vee\otimes M\otoo{\tr}\kk
\]
which is the trace of~$f$. Part~(a) follows from~(b) for $f=1_M$.
\end{proof}

Let us return to our discussion of the tensor-closure~$\Rat$ of the radical.
The relevance of $\otimes$-faithfulness (Definition~\ref{def:xfaithful}) for this question
is isolated in the following result. Recall that $p=\chara(\kk)$ divides~$|G|$.

\begin{Prop}
\label{prop:non-ff}%
Let $M$ be a finitely generated $\kk G$-module which is \emph{not} $\otimes$-faithful.
Then the identity $1_M$ of~$M$ belongs to the tensor-closure $\Ratcat{C}$ of
the radical, both in~$\cat C=\kk G\mmod$ and in~$\cat C=\kk G\sstab$. Hence, $\Ratcat{C}(M,M)=\Homcat{C}(M,M)$.

In particular, the Kelly radical of $\cat C=\kk G\mmod$
is never a $\otimes$-ideal. Moreover,
if there exists such an~$M$ not projective, then
the Kelly radical is not $\otimes$-ideal in the
stable category~$\cat C=\kk G\sstab$.
\end{Prop}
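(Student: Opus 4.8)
The plan is to exploit the fact that non-$\otimes$-faithfulness of $M$ means the functor $M\otimes-$ on $\kk G\sstab$ kills some nonzero morphism, and translate this into a factorization of $1_M$ through objects on which the identity lies in $\Rad^{\otimes}$. First I would record the elementary observation that it suffices to prove the claim for $\cat C = \kk G\mmod$: the quotient functor $\kk G\mmod \to \kk G\sstab$ is a $\otimes$-functor, so it carries $\Rat_{\kk G\mmod}$ into $\Rat_{\kk G\sstab}$, and hence $1_M \in \Rat_{\kk G\mmod}(M,M)$ forces $1_M \in \Rat_{\kk G\sstab}(M,M)$. (For the module-category statement one must be slightly careful that ``not $\otimes$-faithful'' is the hypothesis phrased via the stable category, but by Proposition~\ref{prop:xfaithful} this is equivalent to $\kk$ not being a summand of $M^\vee\otimes M$, a statement internal to $\kk G\mmod$.)

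Next, the core step: by Proposition~\ref{prop:xfaithful}, $M$ not $\otimes$-faithful means precisely that $\eta_M:\kk\to M^\vee\otimes M$ is \emph{not} a split monomorphism, equivalently the counit $\eps_M:M\otimes M^\vee\to \kk$ is not a split epimorphism, equivalently $\kk$ is not a direct summand of $M^\vee\otimes M$. Now consider the standard triangle identity: the composite
\[
M \xrightarrow{\;M\otimes\eta_M\;} M\otimes M^\vee\otimes M \xrightarrow{\;\eps_M\otimes M\;} M
\]
is the identity $1_M$. The middle object is $M\otimes(M^\vee\otimes M)$. Here is the point: $M^\vee\otimes M$ is a $\kk G$-module with no trivial summand, so I claim that $1_{M^\vee\otimes M}$ lies in the tensor-closure of the radical — or more precisely, I should arrange the factorization so that the ``inner'' morphisms $\eta_M$ and $\eps_M$ are forced into $\Rad$. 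Indeed $\eta_M \in \Hom_{\kk G}(\kk, M^\vee\otimes M)$: since $\kk$ is indecomposable and is not a summand of $M^\vee\otimes M$, \emph{every} morphism $\kk\to M^\vee\otimes M$ lies in $\Rad_{\kk G}(\kk, M^\vee\otimes M)$ (a map between indecomposables with no common summand whose composite either way cannot be invertible), and likewise every morphism $M\otimes M^\vee\to\kk$ lies in the radical. Therefore $M\otimes\eta_M = 1_M\otimes\eta_M$ is in $\Rat$, so $1_M = (\eps_M\otimes M)\circ(M\otimes\eta_M)$ is a composite with a factor in the $\otimes$-ideal $\Rat$, hence $1_M\in\Rat_{\kk G\mmod}(M,M)$. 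Since $\Rat(M,M)$ is a subgroup of $\Hom(M,M)$ containing the identity and closed under composition, it is all of $\Hom(M,M)$.

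Finally I would extract the two ``in particular'' assertions. For $\cat C=\kk G\mmod$: such a non-$\otimes$-faithful $M$ always exists — take $M=\kk G$ itself, or any nonzero module of dimension divisible by $p$ that has no trivial summand (e.g. the radical of $\kk G$, or one builds one directly since $p\mid|G|$); then $1_M\in\Rat(M,M)$ but $1_M\notin\Rad(M,M)$ because $M\neq 0$, so $\Rad\neq\Rat$ and the Kelly radical is not a $\otimes$-ideal. For $\cat C=\kk G\sstab$: if the witnessing $M$ can be chosen non-projective, then $M\neq 0$ in $\kk G\sstab$, so again $1_M\notin\Rad_{\kk G\sstab}(M,M)$ while $1_M\in\Rat_{\kk G\sstab}(M,M)$ by the first paragraph, whence $\Rad$ is not a $\otimes$-ideal stably either. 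The main obstacle I anticipate is purely bookkeeping: being scrupulous that ``$\kk$ is not a summand of $M^\vee\otimes M$'' genuinely forces \emph{every} map $\kk\to M^\vee\otimes M$ into the Kelly radical — this uses the Krull–Schmidt property and the characterization of $\Rad$ between indecomposables recalled in Remark~\ref{rem:ideal} — and that the triangle identity used is the honest one (so that $1_M$ really does factor through $M\otimes(M^\vee\otimes M)$ via $\eta$ and $\eps$). Everything else is formal manipulation of ideals of morphisms.
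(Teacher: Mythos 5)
Your proposal is correct and takes essentially the same approach as the paper: both hinge on the observation that $\eta_M:\unit\to M^\vee\otimes M$ lies in the Kelly radical because it is not a split monomorphism, so the unit--counit identity $1_M=(\eps_M\otimes M)\circ(M\otimes\eta_M)$ forces $1_M\in\Ratcat{C}(M,M)$, with $M=\kk G$ (resp.\ a non-projective non-$\otimes$-faithful $M$) giving the two ``in particular'' statements. The only differences are cosmetic: the paper gets $\eta_M\in\Radcat{C}$ directly from $\Endcat{C}(\unit)\simeq\kk$ rather than via Krull--Schmidt, and runs the argument in $\kk G\mmod$ and $\kk G\sstab$ simultaneously instead of reducing the stable case to the module case.
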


\begin{proof}
By assumption, the unit $\eta_M:\unit\to M^\vee\otimes M$ is not
a split monomorphism. This means that $\eta_M$ belongs to the
radical, as $\Endcat{C}(\unit)\simeq \kk$. Hence, the morphism
$M\otimes \eta_M:M\to M\otimes M^\vee\otimes M$ belongs
to~$\Ratcat{C}$. By the unit-counit relation,
we have that $1_M=(\eps_M\otimes M)\circ(M\otimes \eta_M)$.
Hence, $1_M$ belongs to the ideal of morphisms~$\Ratcat{C}$
as claimed. This phenomenon readily implies that the Kelly
radical is not a $\otimes$-ideal if $M\neq 0$ in the category~$\cat C$,
since the identity of a non-zero object never belongs to~$\Radcat{C}$.
In the ordinary category~$\cat C=\kk G\mmod$, the free module $M=\kk G$
gives an example of such a non-zero~$M$.
On the other hand, we need $M$ to be not projective in order
to have that $M\neq 0$ in the stable category~$\cat C=\kk G\sstab$.
\end{proof}

We are therefore naturally led to consider the following ideal of morphisms,
first in~$\kk G\mmod$ and later in~$\kk G\sstab$ (Definition~\ref{def:rat-stab}).

\begin{Def}
\label{def:rat-mod}%
For $M$ and $N$ indecomposable $\kk G$-modules, let $\CI(M,N)$
be the subspace
of $\Hom_{kG}(M, N)$ defined as follows.
\begin{enumerate}[\rm(1)]
\item
\label{it:CI-1}%
If $M \not\simeq N$, then $\CI(M, N) := \Rad(\Hom_{kG}(M,N)) = \Hom_{kG}(M,N)$.
\item
\label{it:CI-2}%
If $M$ is not $\otimes$-faithful, then $\CI(M, M) := \Hom_{kG}(M,M)$.
\item
\label{it:CI-3}%
If $M$ is  $\otimes$-faithful,
then $\CI(M, M) := \Rad(\Hom_{kG}(M,M))$.
\end{enumerate}
(When $N\simeq M$, we define $\CI(M,N)$ via~\eqref{it:CI-2}
or~\eqref{it:CI-3} transported by any such isomorphism.)
This collection of $\CI(M,N)$ is closed under composition,
as discussed in Remark~\ref{rem:ideal}. Hence, it defines a unique
ideal of morphisms in~$\kk G\mmod$, still denoted~$\CI$.
It clearly contains the radical, from which it only differs
in case~\eqref{it:CI-2}.
\end{Def}

As earlier, let us see that this ideal is stable under algebraic field extensions.
\begin{Lem} \label{lem:fieldextend}
Let $\CI_{\kk}$ denote the ideal defined in Definition~\ref{def:rat-mod}
for $\kk G$-modules. Let $\okk$ be an algebraic closure of~$\kk$.
Suppose that $M$ and $N$ are $\kk G$-modules. A map $f: M \to N$
belongs to $\CI_{\kk}(M,N)$ if and only if
$\okk \otimes f$ belongs to $\CI_{\okk}(\okk \otimes M, \okk \otimes N)$.
\end{Lem}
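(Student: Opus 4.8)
The plan is to reduce everything to the behavior of the three defining clauses of Definition~\ref{def:rat-mod} under the faithfully flat base change $\okk\otimes_\kk-$, using the structural results already in hand. Since $\CI$ is determined by its values on indecomposables (Remark~\ref{rem:ideal}) and $\okk\otimes_\kk-$ is additive, I would first reduce to the case where $M$ and $N$ are indecomposable $\kk G$-modules; note however that $\okk\otimes_\kk M$ need \emph{not} be indecomposable, only a direct sum of indecomposables, so I must be careful to argue componentwise on the $\okk$-side. Write $\okk\otimes_\kk M\simeq\bigoplus_a \overline{M}_a$ and $\okk\otimes_\kk N\simeq\bigoplus_b \overline{N}_b$ with each $\overline{M}_a,\overline{N}_b$ indecomposable over $\okk G$.

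The heart of the argument is to analyze $\CI_\okk(\overline M_a,\overline N_b)$ component by component and match it with $\CI_\kk(M,N)$ via the identification $\Hom_{\okk G}(\okk\otimes M,\okk\otimes N)\simeq\okk\otimes_\kk\Hom_{\kk G}(M,N)$ of Proposition~\ref{prop:okk}(a). There are two cases. \emph{Case $M\not\simeq N$:} then $\CI_\kk(M,N)=\Hom_{\kk G}(M,N)$ by clause~(1), so I must show $\okk\otimes f\in\CI_\okk$ automatically; for each pair $(a,b)$, either $\overline M_a\not\simeq\overline N_b$, putting the $(a,b)$-component in $\CI_\okk$ by clause~(1), or $\overline M_a\simeq\overline N_b$ --- but by Proposition~\ref{prop:okk}(c) this forces $M\simeq N$ (since $M\not\simeq N$ are indecomposable and share a summand after base change), a contradiction. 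So the whole of $\Hom_{\okk G}(\okk\otimes M,\okk\otimes N)$ lies in $\CI_\okk$, giving the statement in this case. \emph{Case $M\simeq N$:} here I reduce to $M=N$ and must relate clauses~(2)/(3) over $\kk$ and over $\okk$. The key link is that $M$ is $\otimes$-faithful over $\kk$ if and only if $\okk\otimes_\kk M$ is $\otimes$-faithful over $\okk$, which is exactly the equivalence \eqref{it:x-1}$\iff$\eqref{it:x-okk} of Proposition~\ref{prop:xfaithful}. If $M$ is not $\otimes$-faithful, then $\CI_\kk(M,M)=\Hom_{\kk G}(M,M)$ and likewise every $\overline M_a$ is non-$\otimes$-faithful (an indecomposable summand of a non-$\otimes$-faithful module is non-$\otimes$-faithful, by \eqref{it:x-1}$\iff$\eqref{it:x-1'}), so $\CI_\okk(\okk\otimes M,\okk\otimes M)$ is everything by clauses~(1)--(2), and the equivalence is trivial in this case.

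The remaining and most delicate case is $M=N$ \emph{$\otimes$-faithful}, where $\CI_\kk(M,M)=\Rad_{\kk G}(M)$ and I must prove that $f\in\Rad_{\kk G}(M)$ iff $\okk\otimes f\in\CI_\okk(\okk\otimes M,\okk\otimes M)$. One direction uses Proposition~\ref{prop:okk}(b): $\okk\otimes\Rad_{\kk G}(M)\subseteq\Rad_{\okk G}(\okk\otimes M)\subseteq\CI_\okk$. For the converse I would argue contrapositively: if $f\notin\Rad_{\kk G}(\End(M))$, then since $\End_{\kk G}(M)$ is local (as $M$ is indecomposable), $f$ is an isomorphism, hence $\okk\otimes f$ is an isomorphism of $\okk G$-modules; an isomorphism never lies in a proper ideal of morphisms, and $\CI_\okk$ is proper on the nonzero object $\okk\otimes M$ (its diagonal entries are $\Rad$ in clause~(3) precisely because the $\otimes$-faithful summands exist --- here is where $\otimes$-faithfulness of $M$, hence of some $\overline M_a$, is used to guarantee $\CI_\okk\neq\Hom$). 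I expect this last point --- verifying that $\CI_\okk(\okk\otimes M,\okk\otimes M)$ is a proper subgroup, equivalently that $\okk\otimes f$ being an isomorphism genuinely escapes it --- to be the main obstacle, since it requires combining the block/summand decomposition of $\okk\otimes M$ with the fact that at least one summand is $\otimes$-faithful so that clause~(3) rather than clause~(2) applies there. Once that is settled, assembling the componentwise statements back into the single equation $f\in\CI_\kk\iff\okk\otimes f\in\CI_\okk$ is routine bookkeeping via Remark~\ref{rem:ideal}.
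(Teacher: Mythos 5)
Your proposal is correct and follows essentially the same route as the paper's proof: the same reduction to indecomposables, the same case split via Proposition~\ref{prop:okk}\,(c) and the equivalence \eqref{it:x-1}$\iff$\eqref{it:x-okk} of Proposition~\ref{prop:xfaithful}, and in the $\otimes$-faithful case the same two ingredients (Proposition~\ref{prop:okk}\,(b) for one direction, and for the other that a non-radical endomorphism of an indecomposable is invertible while $\CI_{\okk}$ is proper because some summand of $\okk\otimes M$ is $\otimes$-faithful). The point you flag as the main obstacle is exactly the paper's one-line observation, and your sketch (an isomorphism in an ideal would force the identity, whose component on a $\otimes$-faithful summand is not in the radical) settles it.
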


\begin{proof}
We may assume that $M$ and $N$ are indecomposable.
If $M\not\simeq N$, then no nonzero summand of~$\okk \otimes M$
is isomorphic to any summand of~$\okk \otimes N$ by
Proposition~\ref{prop:okk}\,\eqref{it:MN-summand}. Then
$\CI_{\kk}(M,N)=\Hom_{\kk G}(M,N)$ and similarly for~$\okk$,
so there is nothing to prove about~$f$.
Suppose therefore that $M\simeq N$. Recall from
Proposition~\ref{prop:xfaithful} that $M$ is $\otimes$-faithful
if and only if $\okk\otimes M$ is. So, if $M$ is
not $\otimes$-faithful, then neither is any summand
of~$\okk\otimes M$ and again $\CI_{\kk}(M,M)$ and
$\CI_{\okk}(\okk\otimes M,\okk\otimes M)$ are the entire groups
of homomorphisms and there is nothing to prove about~$f$.

Let us then assume $M$ $\otimes$-faithful and take $f:M\to M$.
If $f$ does not belong to $\CI_{\kk}(M,M)=\Rad_{\kk G}(M)$ then $f$
is an isomorphism and then so is $\okk \otimes f$. As one summand
of~$\okk\otimes M$ is $\otimes$-faithful, the isomorphism
$\okk\otimes f$ does not belong
to~$\CI_{\okk}(\okk\otimes M,\okk\otimes M)$.
Conversely, suppose that $f:M\to M$ belongs to~$\CI_{\kk}(M,M)$,
which is the radical of $\End_{\kk G}(M)$.
By Proposition~\ref{prop:okk}\,\eqref{it:Lam},
$\okk \otimes f$ belongs to the radical,
which is contained in the larger ideal~$\CI_{\okk}$.
\end{proof}

\begin{Thm}
\label{thm:tens-rad}%
The ideal $\CI$ of Definition~\ref{def:rat-mod} is the tensor-closure
$\Rat$ of the Kelly radical (Definition~\ref{def:Rat})
of the category $\kk G\mmod$.
\end{Thm}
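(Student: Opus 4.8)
The plan is to prove the two inclusions $\Rat\subseteq\CI$ and $\CI\subseteq\Rat$ separately, working in the Krull–Schmidt category $\kk G\mmod$ so that, by Remark~\ref{rem:ideal}, everything reduces to checking the claims on homomorphism groups between indecomposable modules. First I would verify that $\CI$ is a $\otimes$-ideal containing $\Rad$; together with minimality of $\Rat$ this gives $\Rat\subseteq\CI$ for free. Containment of $\Rad$ is built into Definition~\ref{def:rat-mod}, so the work is to show $f\otimes L\in\CI$ whenever $f\in\CI(M,N)$ and $L$ is any module. Here I would use Lemma~\ref{lem:fieldextend} to pass to $\okk$, where by Proposition~\ref{prop:xfaithful}\,\eqref{it:pdiv3} the $\otimes$-faithful indecomposables are exactly those of dimension prime to~$p$, and then argue componentwise: if $f$ lies in $\Rad$ then so does $f\otimes L$ since $\Rad$ composed with anything stays in $\Rad$... no, that is false, so instead the point is that on each indecomposable summand $M_i\otimes L_a\to N_j\otimes L_b$ of $f\otimes L$, either the source and target are non-isomorphic (and we are in case~\eqref{it:CI-1}, nothing to check), or they are isomorphic indecomposables $U$, and then I must show the component lands in $\CI(U,U)$: if $U$ is not $\otimes$-faithful this is automatic by~\eqref{it:CI-2}, and if $U$ \emph{is} $\otimes$-faithful (dimension prime to $p$ over $\okk$) I must show the component is in $\Rad(U)$, i.e.\ is not an isomorphism. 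This last point is where Theorem~\ref{thm:tracemap} and Lemma~\ref{lem:traces} enter: an isomorphism $U\to U$ appearing as a summand-component of $f\otimes L$ would, after composing with the inclusion and projection of the $\kk$-summand of $U^\vee\otimes U$, produce a nonzero multiple of the trace of an isomorphism; but $f\in\CI$ means (in the only remaining case) $f\in\Rad(M,M)$ with $M$ $\otimes$-faithful, and one traces through that such a component must itself be radical because the relevant trace computation factors the offending map through $f$, which is radical. Assembling this carefully is the main obstacle.

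For the reverse inclusion $\CI\subseteq\Rat$, the only case that is not already covered by $\Rad\subseteq\Rat$ is case~\eqref{it:CI-2}: if $M$ is indecomposable and not $\otimes$-faithful, I must show $1_M\in\Rat(M,M)$. But this is precisely the content of Proposition~\ref{prop:non-ff}, which shows $1_M=(\eps_M\otimes M)\circ(M\otimes\eta_M)$ with $M\otimes\eta_M\in\Rat$ because $\eta_M\in\Rad$ (it is not a split monomorphism, and $\End(\unit)\simeq\kk$). So case~\eqref{it:CI-2} gives $1_M\in\Rat(M,M)$, hence $\CI(M,M)=\Hom(M,M)\subseteq\Rat$. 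Cases~\eqref{it:CI-1} and~\eqref{it:CI-3} give $\CI\subseteq\Rad\subseteq\Rat$ directly. By Remark~\ref{rem:ideal} this suffices to conclude $\CI\subseteq\Rat$ on all of $\kk G\mmod$.

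The architecture is therefore: (1) $\CI\subseteq\Rat$ via Proposition~\ref{prop:non-ff}, essentially immediate; (2) $\CI$ is a $\otimes$-ideal, via reduction to indecomposables, base change to $\okk$ (Lemma~\ref{lem:fieldextend}), the dimension-prime-to-$p$ characterization of $\otimes$-faithfulness (Proposition~\ref{prop:xfaithful}), and the trace/multiplicity-one machinery (Theorem~\ref{thm:tracemap}, Lemma~\ref{lem:traces}); (3) conclude $\Rat\subseteq\CI$ by minimality. I expect step (2), specifically checking that a $\otimes$-faithful indecomposable summand of $M\otimes L$ inherits a radical component from a radical $f$, to require the most care: one cannot just say ``radical composed with anything is radical'' since $-\otimes L$ is not composition, so the argument must genuinely use the trace pairing to detect that the component of $f\otimes L$ on such a summand cannot be invertible. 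A clean way to organize this is to note that $\otimes$-faithful indecomposables are $\otimes$-invertible up to a summand—more precisely, if $U$ is $\otimes$-faithful then $\kk$ is a summand of $U^\vee\otimes U$ with multiplicity one—so that detecting isomorphisms of $U$ via the induced endomorphism of that $\kk$-summand is faithful on the level needed, and then $f\otimes L$ restricted to a copy of $U$ composes with $f$ in a way that lands in $\Rad(\kk)=0$.
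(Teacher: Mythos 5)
Your overall architecture matches the paper's: $\CI\subseteq\Rat$ via Proposition~\ref{prop:non-ff}, then prove that $\CI$ is a $\otimes$-ideal (reducing to $\okk$ by Lemma~\ref{lem:fieldextend} and using the dimension-prime-to-$p$ criterion), and conclude $\Rat\subseteq\CI$ by minimality. However, there is a genuine gap in the key step. Your reduction ``the only remaining case is $f\in\Rad(M,M)$ with $M$ $\otimes$-faithful'' is wrong. If $M\not\simeq N$ (case~\eqref{it:CI-1} of Definition~\ref{def:rat-mod}), then $\CI(M,N)=\Hom_{\kk G}(M,N)$ is \emph{everything}, so the tensor-ideal property must be checked for an arbitrary $f:M\to N$; it is not vacuous, it is harder. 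And it can genuinely happen that $M\not\simeq N$ are both $\otimes$-faithful while $M\otimes L$ and $N\otimes L$ share a $\otimes$-faithful indecomposable summand $U$: for $G=C_p$ with $p\ge 7$, Formula~\eqref{eq:i@j} gives $[2]\otimes[2]\simeq[1]\oplus[3]$ and $[4]\otimes[2]\simeq[3]\oplus[5]$, so $M=[2]$, $N=[4]$, $L=[2]$, $U=[3]$. For such $f$ you must still show that the component of $f\otimes L$ on $U$ cannot be invertible, and your trace argument, which presumes an endomorphism $f$ of $M$ with $\tr(f)$ making sense, does not apply. This is exactly what the paper's Lemma~\ref{lem:tens-rad} supplies: the trace pairing produces a nonzero composite $\kk\to M\otimes V\to N\otimes V\to\kk$ for some indecomposable summand $V$ of $L\otimes U^\vee$, and then the full strength of Theorem~\ref{thm:tracemap} (not just multiplicity one, but the identification $V\simeq M^\vee$ \emph{and} $V\simeq N^\vee$) forces $M\simeq N$; only after that does the endomorphism case arise.

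Secondarily, even in the endomorphism case your mechanism is not a valid inference as stated: ``the relevant trace computation factors the offending map through $f$, which is radical'' is the same kind of reasoning you correctly rejected earlier ($\Rad$ is not preserved by $-\otimes L$, and being built from a radical $f$ does not by itself put a composite in the radical). The correct mechanism, as in Lemma~\ref{lem:traces}\,(b), is that the composite $\kk\to M\otimes M^\vee\otoo{f\otimes 1}M\otimes M^\vee\to\kk$ equals a nonzero scalar multiple of $\tr(f)$, while a radical endomorphism of an absolutely indecomposable module is nilpotent and hence has trace zero; so invertibility of the component forces $\tr(f)\neq 0$ and thus $f$ invertible, the desired contradiction. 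With Lemma~\ref{lem:tens-rad} stated and proved in this generality (allowing $M\not\simeq N$), your outline becomes the paper's proof.
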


The critical point is the following:
\begin{Lem}
\label{lem:tens-rad}%
Assume that $\kk = \okk$ is algebraically closed.
Let $M$ and $N$ be indecomposable $\kk G$-modules of dimension prime
to~$p$ and let $f:M\to N$ be a homomorphism. Suppose that
$X$ is a $\kk G$-module
such that there is a common indecomposable summand $U\le M\otimes X$ and
$U\le N\otimes X$ of dimension prime to~$p$, with split injection
$i:U\into M\otimes X$ and split projection $p:N\otimes X\onto U$.
Suppose that $p\circ (f\otimes X) \circ i:U\to U$ is an isomorphism.
Then $f:M\to N$ is an isomorphism.
\end{Lem}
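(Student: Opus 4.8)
The plan is to transfer the isomorphism hypothesis on $U$ to an isomorphism on the indecomposable modules $M$ and $N$ by applying the dual of $X$ and harvesting a well-chosen summand, using the trace machinery of Lemma~\ref{lem:traces} to detect when a composite of split maps is nonzero. First I would form the morphism $g:M\to N$ obtained by ``cancelling $X$'', namely the composite
\[
M\simeq M\otimes\unit \otoo{M\otimes\eta_X} M\otimes X\otimes X^\vee \otoo{i\otimes X^\vee} U\otimes X^\vee \otoo{p\otimes X^\vee} N\otimes X\otimes X^\vee \otoo{N\otimes\eps_X} N\otimes\unit\simeq N,
\]
or rather I would not try to produce a single clean formula but instead argue via summands. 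Since $U$ is $\otimes$-faithful (its dimension is prime to $p$, see Example~\ref{exa:pdiv}), by Proposition~\ref{prop:xfaithful} the unit $\eta_U:\kk\to U^\vee\otimes U$ is a split monomorphism and the trace $\tr:U^\vee\otimes U\to\kk$ is a split epimorphism; moreover $U^\vee$ is a summand of $X^\vee\otimes N^\vee$ and of $X^\vee\otimes M^\vee$ via the duals of $p$ and $i$.

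Next I would unwind what the hypothesis ``$p\circ(f\otimes X)\circ i$ is an isomorphism on $U$'' says after tensoring with $U^\vee$ and composing with $\eta_U$ and $\tr$. The key point is that $\Hom_{\kk G}(M,N)=\bigoplus$ over common summands, and by Proposition~\ref{prop:okk}\,\eqref{it:MN-summand} (already available since $\kk=\okk$) the module $N$ is the \emph{only} indecomposable summand of $N$, so $\Hom_{\kk G}(M,N)$ is zero unless $M\simeq N$; thus it suffices to show $f\notin\Rad(\Hom_{\kk G}(M,N))$, equivalently (once we know $M\simeq N$) that $f$ is not in the radical of $\End_{\kk G}(M)$. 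Suppose for contradiction that $f$ lies in the radical. Then $f\otimes X$ lies in the radical ideal of $\kk G\mmod$ restricted to $M\otimes X$ and $N\otimes X$ — more precisely $p\circ(f\otimes X)\circ i$ is a composite of a split epi, a radical morphism, and a split mono between the indecomposable module $U$ and itself, which therefore lands in $\Rad(\End_{\kk G}(U))$. A morphism in $\Rad(\End_{\kk G}(U))$ on an indecomposable finite-dimensional module is non-invertible (the endomorphism ring is local), contradicting the hypothesis that $p\circ(f\otimes X)\circ i$ is an isomorphism. Hence $f$ is not in the radical, so $f$ is invertible, which in particular forces $M\simeq N$.

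The one subtlety I want to be careful about — and the place I expect to spend the most effort — is justifying that $f\otimes X$ belongs to the radical \emph{ideal} whenever $f\in\Rad_{\kk G}(M,N)$. This is exactly the statement that the ordinary radical is not quite a $\otimes$-ideal in general, yet it \emph{is} closed under tensoring when the composite is then sandwiched between split maps into an indecomposable module: the point is that $\Rad$ is an ideal closed under composition, so $p\circ(f\otimes X)\circ i$ lands in $\End_{\kk G}(U)$ and it suffices to know it lies in $\Rad(\End_{\kk G}(U))$. If $f\in\Rad_{\kk G}(M,N)$ then for every $h:N\to M$ we have $1-hf$ invertible; tensoring with $X$ and conjugating by $i,p$ shows that $1_U - p(h\otimes X)(f\otimes X)i$ differs from an invertible map by a morphism factoring through $\mathrm{coker}$ of the split maps, and a short argument with the Fitting-type decomposition $M\otimes X\simeq U\oplus(\text{rest})$ shows $p\circ(f\otimes X)\circ i\in\Rad(\End_{\kk G}(U))$ as needed. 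Alternatively, and perhaps more cleanly, I would invoke Lemma~\ref{lem:traces}\,(a): if $p\circ(f\otimes X)\circ i$ were non-invertible it would lie in the radical, and then composing with $\eta_U$ and $\tr$ (applied after $-\otimes U^\vee$) gives zero by $\Rad(\kk)=0$, contradicting that an isomorphism of $U$ has nonzero trace-type invariant. Either route completes the proof; the bookkeeping of which route is shortest is the only real work.
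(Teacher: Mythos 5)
Your proposal has a genuine gap at exactly the point you flag as the ``one subtlety'': the claim that $f\in\Rad(\Hom_{\kk G}(M,N))$ forces $p\circ(f\otimes X)\circ i\in\Rad(\End_{\kk G}(U))$ is not a technical detail to be finessed --- it \emph{is} the lemma, in contrapositive form, and neither of your two routes establishes it. Route 1 is purely formal: knowing $1_M-hf$ is invertible for all $h:N\to M$ only controls endomorphisms of $U$ of the special shape $p(h\otimes X)i$, not arbitrary elements of $\End_{\kk G}(U)$, and compressing morphisms to a direct summand does not preserve the relevant (non)invertibility; no ``Fitting-type'' bookkeeping repairs this. Decisively, route 1 never uses the hypothesis that $M$, $N$ and $U$ have dimension prime to~$p$, yet the statement is false without it: Proposition~\ref{prop:non-ff} (the radical of $\kk G\mmod$ is never a $\otimes$-ideal) produces, from a non-$\otimes$-faithful module, a radical morphism $f$ between indecomposables and a module $X$ such that $f\otimes X$ is invertible on a common indecomposable summand. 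So no argument of this purely formal kind can succeed. Route 2 has the logic inverted: you suppose $p(f\otimes X)i$ is non-invertible and derive a contradiction, but the lemma's hypothesis is that this map \emph{is} invertible; what must be excluded is that $f$ itself is non-invertible (or that $M\not\simeq N$), and your trace remark is not addressed to that. (Also, the assertion that $\Hom_{\kk G}(M,N)=0$ unless $M\simeq N$ is incorrect; for non-isomorphic indecomposables these Hom groups are merely contained in the radical.)

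The paper's proof supplies exactly the missing content, and it is where the prime-to-$p$ hypotheses and Theorem~\ref{thm:tracemap} enter. Writing $g=p(f\otimes X)i$, one tensors with $U^\vee$, uses Lemma~\ref{lem:traces}\,(a) to see that $\tr_U\circ(g\otimes 1)\circ\eta_U:\kk\to\kk$ is nonzero, and decomposes $X\otimes U^\vee$ into indecomposables $V$: some composite $\kk\to M\otimes V\otoo{f\otimes 1}N\otimes V\to\kk$ must be nonzero. Benson--Carlson (Theorem~\ref{thm:tracemap}) applied to $M\otimes V$ and to $N\otimes V$ gives $V\simeq M^\vee$ and $V\simeq N^\vee$, hence $M\simeq N$; after identifying $N$ with $M$, Lemma~\ref{lem:traces}\,(b) shows the nonzero composite is a nonzero multiple of $\tr(f)$, so $f$ is not nilpotent, hence not in the radical of the local ring $\End_{\kk G}(M)$, hence invertible. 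Your sketch names some of the tools ($\eta$, $\tr$, Lemma~\ref{lem:traces}) but never carries out this decomposition-and-trace argument; as written, the deduction rests on an unproved --- and, absent the prime-to-$p$ hypotheses, false --- tensor-stability of the radical.
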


\begin{proof}
Let $g:=p\circ (f\otimes 1_X) \circ i:U\isoto U$ be our isomorphism.
Tensoring with $U^\vee$, we have an automorphism $g\otimes 1$
of~$U\otimes U^\vee$. The composite
\[
\xymatrix{
\kk \ar@{ >->}[r]^-{\eta_U}
& U \otimes U^\vee \ar[r]^-{i \otimes 1} \ar@/_2em/[rrr]^-{g\otimes 1}_-{\simeq}
& M \otimes X \otimes U^\vee \ar[r]^-{f \otimes 1 \otimes 1}
& N \otimes X \otimes U^\vee \ar[r]^-{p\otimes 1}
& U \otimes U^\vee \ar@{->>}[r]^-{\tr_U}
& \kk
}
\]
is non-zero by Lemma~\ref{lem:traces}\,(a), that is, an isomorphism.
Decomposing $X\otimes U^\vee$ into a sum of indecomposable summands~$V$,
the above isomorphism $\kk\to \kk$ is the sum of the corresponding
compositions
\[
\xymatrix{
\kk \ar@{->}[r]^-{}
& M \otimes V \ar[r]^-{f \otimes 1}
& N \otimes V \ar@{->}[r]^-{}
& \kk
}
\]
over all these~$V$. This holds because the middle map
$f\otimes 1\otimes 1$ above ``is" the identity on the
$X\otimes U^\vee$ factor. Since the sum of these morphisms is non-zero,
one of them must be non-zero, \ie an isomorphism, for some~$V$.
Applying Theorem~\ref{thm:tracemap} to $M\otimes V$ and again
to $N\otimes V$, we have that $V\simeq M^\vee$ and that
$V\simeq N^\vee$. In particular, $M\simeq N$. Replacing $N$ by $M$
using such an isomorphism, we can assume that $f:M\to M$ is
an endomorphism. The above isomorphism $\kk \to \kk$ now becomes
\[
\xymatrix{
\kk \ar@{ >->}[r]^-{j}
& M \otimes M^\vee \ar[r]^-{f \otimes 1}
& M \otimes M^\vee \ar@{->>}[r]^-{q}
& \kk
}
\]
for some morphisms~$j$ and~$q$ which must be a split mono and a
split epi respectively, since that composite is an isomorphism.
By Lemma~\ref{lem:traces}\,(b) this composite is also a
non-zero multiple of the trace of~$f$. Because the composite is an isomorphism,
$\tr(f)\neq 0$, and therefore $f$ cannot be
nilpotent. It follows that $f$ cannot belong to the radical of the
finite-dimensional $\kk$-algebra~$\End_{\kk G}(M)$. Hence, $f$ is invertible,
as claimed.
\end{proof}

\begin{proof}[Proof of Theorem~\ref{thm:tens-rad}]
We already know that the Kelly radical is contained in $\CI$.
The first thing to note
is that $\CI$ is contained in the tensor ideal $\Rat$
generated by the radical. This follows from the definition of~$\CI$ and Proposition~\ref{prop:non-ff}.

It remains to show that $\CI$ is a tensor ideal.
To begin, assume that $\kk = \okk$ is algebraically closed.
In this case an indecomposable $\kk G$-module is $\otimes$-faithful
if and only if its dimension is not divisible by~$p$.
Let $f\in\CI(M,N)$ with $M$ and $N$ indecomposable,
and let $X$ be an object.  We want
to show that $f\otimes X$ belongs to~$\CI$. To test this, we need
to decompose $M\otimes X$ and $N\otimes X$ into a sum of indecomposable.
Suppose ab absurdo that $f\otimes X$ does not belong
to~$\CI(M\otimes X,N\otimes X)$. Since $\CI(-,-)$ is often equal
to the whole of $\Hom(-,-)$, the only way that $f\otimes X$
cannot belong to~$\CI$ is that $M\otimes X$ and $N\otimes X$
admit a common direct summand $U$, of dimension prime to~$p$,
on which $f\otimes X$ is invertible (see case~\eqref{it:CI-3}
of Definition~\ref{def:rat-mod}). So~$U$ is $\otimes$-faithful,
hence so are $M\otimes X$ and $N\otimes X$, and therefore $M$ and~$N$ as well;
see Proposition~\ref{prop:xfaithful}, \eqref{it:x-1'}\then\eqref{it:x-1} and \eqref{it:x-3}\then\eqref{it:x-1}.
Therefore $\CI(M,N)=\Rad(M,N)$. In summary, $f\in \CI(M,N)$ is non-invertible but
$f\otimes X$ is invertible on some common indecomposable
summand~$U$ of $M\otimes X$ and $N\otimes X$, of dimension prime to~$p$.
This is exactly the situation excluded by Lemma~\ref{lem:tens-rad}
(since $\kk$ is algebraically closed).

Now consider the case of a general field $\kk$, perhaps not algebraically
closed. Suppose that $f: M \to N$ is in $\CI_{\kk}(M,N)$. Let $X$ be
any $\kk G$-module. Then $\okk \otimes (f \otimes X)
= (\okk \otimes f) \otimes (\okk \otimes X)$ is in
$\CI_{\okk}(\okk \otimes (X \otimes M), \okk \otimes (X \otimes N))$.
But now Lemma~\ref{lem:fieldextend}, implies that
$f \otimes X$ is in $\CI_{\kk}(M\otimes X, N \otimes X)$. Thus
$\CI_{\kk}$ is a tensor ideal and the proof is complete.
\end{proof}

\begin{Rem}
\label{rem:rat-stab}%
Everything that we have done in the module category will translate directly
to the stable category, except that the ideal needs to be defined somewhat differently.
Recall that $\Phom_{\kk G}(M,N)$ is the subspace of
$\Hom_{\kk G}(M,N)$ consisting
of all homomorphisms from $M$ to $N$ that factor through a projective
module. It is very easy to see that $\Phom_{\kk G}(M,N) \subseteq \CI(M,N)$.
Indeed, the only case, for $M$ and $N$ indecomposable, that $\Phom_{\kk G}(M,N)$
is not in the Kelly radical, occurs when $M \simeq N$ is projective, and
no projective module is $\otimes$-faithful.
\end{Rem}

\begin{Def}
\label{def:rat-stab}%
In the stable category, we define
\[
\CI_s(M, N) \ = \ \CI(M,N)/ \Phom_{\kk G}(M,N)\,.
\]
This clearly is an ideal in the stable category $\kk G\sstab$.
Explicitly, from Definition~\ref{def:rat-mod}, we have for $M$ and $N$ indecomposable in~$\cat C:=\kk G\sstab$:
\begin{enumerate}[\rm(1)]
\item
\label{it:CJ-1}%
If $M \not\simeq N$, then $\CI_s(M, N) := \Radcat{C}(M,N) = \Homcat{C}(M,N)$.
\item
\label{it:CJ-2}%
If $M$ is not $\otimes$-faithful, then $\CI_s(M, M) := \Homcat{C}(M,M)$.
\item
\label{it:CJ-3}%
If $M$ is  $\otimes$-faithful,
then $\CI_s(M, M) := \Radcat{C}(M,M)$.
\end{enumerate}
\end{Def}

\begin{Thm}
\label{thm:tens-rad2}%
The ideal $\CI_s$ is the tensor-closure $\Rat$
of the Kelly radical (Definition~\ref{def:Rat})
of the category $\kk G\sstab$.
\end{Thm}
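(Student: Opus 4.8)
The plan is to deduce the stable statement from the already-proven module statement (Theorem~\ref{thm:tens-rad}) by pushing everything through the quotient functor $\kk G\mmod \onto \kk G\sstab$, which kills exactly $\Phom$. First I would record the easy inclusion: by Remark~\ref{rem:rat-stab} we have $\Phom_{\kk G}\subseteq\CI$, so $\CI_s:=\CI/\Phom$ is a well-defined ideal of $\kk G\sstab$, and since $\CI$ is a $\otimes$-ideal of $\kk G\mmod$ (Theorem~\ref{thm:tens-rad}) containing $\Phom$, its image $\CI_s$ is a $\otimes$-ideal of $\kk G\sstab$ (tensoring in $\kk G\sstab$ is induced from tensoring in $\kk G\mmod$). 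As $\CI_s$ clearly contains the image of $\Rad_{\kk G}$, and that image contains the Kelly radical $\Rad_{\kk G\sstab}$ (a morphism in the radical of $\End_{\kk G\sstab}(M)$ lifts to the radical of $\End_{\kk G}(M)$ since $\Phom_{\kk G}(M,M)\subseteq\Rad_{\kk G}(M)$ for $M$ indecomposable non-projective, $\End_{\kk G}(M)$ being local), it follows that $\Rat_{\kk G\sstab}\subseteq\CI_s$.

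For the reverse inclusion $\CI_s\subseteq\Rat_{\kk G\sstab}$, the only case to treat is case~\eqref{it:CJ-2}, i.e.\ showing that if $M$ is indecomposable non-projective and not $\otimes$-faithful, then $1_M\in\Rat_{\kk G\sstab}(M,M)$; all other components of $\CI_s$ already lie in the Kelly radical of the stable category. But this is precisely what Proposition~\ref{prop:non-ff} gives (its final sentence is stated for $\cat C=\kk G\sstab$): the unit $\eta_M:\unit\to M^\vee\otimes M$ is not a split monomorphism in $\kk G\sstab$, hence lies in $\Radcat{C}(\unit,M^\vee\otimes M)$ as $\Endcat{C}(\unit)=\kk$, so $M\otimes\eta_M\in\Rat_{\kk G\sstab}$, and the unit--counit relation $1_M=(\eps_M\otimes M)(M\otimes\eta_M)$ forces $1_M\in\Rat_{\kk G\sstab}(M,M)$. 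This closes the chain of inclusions.

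Alternatively, and perhaps more cleanly, one can argue that $\Rat_{\kk G\sstab}$ is exactly the image of $\Rat_{\kk G\mmod}=\CI$ under $Q:\kk G\mmod\onto\kk G\sstab$: since $Q$ is a full $\otimes$-functor, $Q(\CI)$ is a $\otimes$-ideal containing $Q(\Rad_{\kk G})\supseteq\Rad_{\kk G\sstab}$, so $\Rat_{\kk G\sstab}\subseteq Q(\CI)=\CI_s$; and conversely any $\otimes$-ideal $\mathcal J$ of $\kk G\sstab$ with $\Rad_{\kk G\sstab}\subseteq\mathcal J$ pulls back to a $\otimes$-ideal $Q^{-1}(\mathcal J)$ of $\kk G\mmod$ containing $\Rad_{\kk G}$ (using again that $\Phom\subseteq\Rad_{\kk G}$ on indecomposables plus additivity of ideals, Remark~\ref{rem:ideal}), hence containing $\CI=\Rat_{\kk G\mmod}$, so $\mathcal J\supseteq Q(\CI)=\CI_s$; taking $\mathcal J=\Rat_{\kk G\sstab}$ gives $\CI_s\subseteq\Rat_{\kk G\sstab}$.

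The main obstacle is bookkeeping rather than mathematics: one must be careful that "the Kelly radical of $\kk G\sstab$" is the image of "the Kelly radical of $\kk G\mmod$" under $Q$, which hinges on the elementary fact that $\Phom_{\kk G}(M,M)\subseteq\Rad_{\kk G}(M)$ when $M$ is indecomposable non-projective (so that no genuine radical information is lost or gained in passing to the quotient), together with the Krull--Schmidt reduction of Remark~\ref{rem:ideal} to handle decomposable objects. Once that identification is in place, everything is formal and the heavy lifting has already been done in Theorem~\ref{thm:tens-rad} and Proposition~\ref{prop:non-ff}.
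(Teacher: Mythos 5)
Your proposal is correct and follows essentially the same route as the paper: deduce that $\CI_s=\CI/\Phom_{\kk G}$ is a $\otimes$-ideal from Theorem~\ref{thm:tens-rad}, observe it contains the stable Kelly radical (hence contains $\Rat$), and handle the only discrepancy with the radical, namely $\CI_s(M,M)$ for $M$ not $\otimes$-faithful, via Proposition~\ref{prop:non-ff} applied in $\kk G\sstab$. Your alternative argument with $Q(\CI)$ and $Q^{-1}(\mathcal J)$ is just a formal repackaging of the same inclusions, so there is nothing further to add.
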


\begin{proof}
By Theorem~\ref{thm:tens-rad}, $\CI$ is a $\otimes$-ideal,
and since $\Phom_{\kk G}$ is also a $\otimes$-ideal,
so is~$\CI_s=\CI/\Phom_{\kk G}$. The rest follows easily
as before. Indeed, $\CI_s$ clearly contains the radical,
and agrees with it in most cases, except in the case of~$\CI_s(M,M)$
for $M$ not $\otimes$-faithful where $\CI_s(M,M)=\Homcat{C}(M,M)$.
But in that case, this is also $\Ratcat{C}(M,M)$ by
Proposition~\ref{prop:non-ff} for $\cat C=\kk G\sstab$.
\end{proof}

This leads directly to the following.
\begin{Thm}\label{thm:rad}%
Let $\kk$ be a field of characteristic~$p>0$ and let $G$ be a finite group of order divisible by~$p$.
Then the Kelly radical $\Radcat{C}$ of the category~$\cat C=\kk G\sstab$
is a $\otimes$-ideal if and only if $p^2$ does not divide the order of~$G$.
\end{Thm}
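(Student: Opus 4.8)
The plan is to use the characterization of the tensor-closure $\Rat$ provided by Theorem~\ref{thm:tens-rad2}, which says that $\Rad_{\cat C}=\Rat_{\cat C}$ in $\cat C=\kk G\sstab$ precisely when the ideal $\CI_s$ of Definition~\ref{def:rat-stab} coincides with the radical, and this happens exactly when case~\eqref{it:CJ-2} never genuinely enlarges the radical, \ie when every non-projective indecomposable $\kk G$-module is $\otimes$-faithful. So the whole statement reduces to the purely representation-theoretic claim: every non-projective indecomposable $\kk G$-module has a direct summand of dimension prime to~$p$ (after passing to~$\okk$, using Proposition~\ref{prop:xfaithful}\,\eqref{it:pdiv3}) if and only if $p^2\nmid|G|$.

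For the ``if'' direction, assume $p^2\nmid|G|$, so a Sylow $p$-subgroup~$P$ of~$G$ is cyclic of order~$p$. First I would reduce to $\okk$ via Proposition~\ref{prop:xfaithful}, so assume $\kk$ algebraically closed. Let $M$ be an indecomposable non-projective $\kk G$-module; I want a summand of dimension prime to~$p$. Restrict to~$P$: by the theory of modules over $\kk C_p$ (blocks of cyclic defect, or directly the classification of indecomposable $\kk C_p$-modules as the Jordan blocks $[1],\dots,[p]$), $M{\downarrow_P}$ is a sum of such $[i]$'s, and since $M$ is non-projective at least one summand is some $[i]$ with $1\le i\le p-1$, whose dimension~$i$ is prime to~$p$; hence $p\nmid\dim M$ is \emph{not} automatic, but one can argue via the defect group / Green correspondence that the dimension of~$M$ itself is prime to~$p$ — indeed for a group with cyclic Sylow $p$-subgroup of order~$p$, a non-projective indecomposable has dimension prime to~$p$ (its restriction to~$P$ has a non-projective summand $[i]$ with $i<p$, and projective $\kk P$-summands of $M{\downarrow_P}$ contribute multiples of~$p$; more carefully, write $\dim M = ap + i$ and use that the number of $[p]$-summands times $p$ plus $i$ equals $\dim M$, so $\dim M \equiv i \not\equiv 0 \pmod p$). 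Then $M$ itself is $\otimes$-faithful by Example~\ref{exa:pdiv}, so $\CI_s$ equals the radical and $\Rad_{\cat C}$ is a $\otimes$-ideal.

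For the ``only if'' direction, suppose $p^2\mid|G|$. Invoking Proposition~\ref{prop:non-ff}, it suffices to exhibit a non-projective indecomposable $\kk G$-module~$M$ that is \emph{not} $\otimes$-faithful. By Proposition~\ref{prop:xfaithful}\,\eqref{it:pdiv3} (applied over~$\okk$, and then Lemma~\ref{lem:fieldextend}/extension of scalars to descend or simply to produce an example over~$\okk$), it is enough to find such an~$M$ every summand of which has dimension divisible by~$p$; the cleanest choice is an indecomposable~$M$ with $p\mid\dim M$. Take $P\le G$ with $|P|=p^2$; over $\kk P$ one has an indecomposable non-projective module of dimension divisible by~$p$ — for instance, when $P$ is cyclic of order~$p^2$ the Jordan block $[p]$ (dimension~$p$, non-projective since $p<p^2$), and when $P\cong C_p\times C_p$ one uses that there exist indecomposables of every dimension and in particular of dimension~$p$, or more robustly the module $\Omega(\kk)$ or a suitable syzygy which is non-projective of dimension a multiple of~$p$ (e.g.\ for $C_p\times C_p$, $\dim\Omega(\kk)=|P|-1$, which won't do, so instead pick the $p$-dimensional indecomposable in the classification, or induce $[p]$ from a $C_p$-subgroup and peel off a non-projective indecomposable summand, all of whose $P$-summands, hence whose dimension, are divisible by~$p$ by restriction). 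Inducing up to~$G$: let $M_0$ be such a non-projective indecomposable $\kk P$-module with $p\mid\dim M_0$, and let $M$ be a non-projective indecomposable summand of $M_0{\uparrow^G}$ (one exists since $M_0{\uparrow^G}{\downarrow_P}$ has $M_0$ as a summand by Mackey and $M_0$ is non-projective); then $\dim M$ divides... no — rather, restricting any summand~$M$ of $M_0{\uparrow^G}$ back to~$P$, if $M$ were $\otimes$-faithful then over~$\okk$ some summand of $\okk\otimes M$ has dimension prime to~$p$, but one checks $\okk\otimes(M_0{\uparrow^G})$ is induced from a $p$-dimensional-summand situation on a subgroup and every indecomposable summand has dimension divisible by~$p$ (as all composition factors of its restriction to~$P$ coming from~$M_0$ carry the $p$-divisibility); hence no summand of~$M$ is $\otimes$-faithful, so $M$ is not $\otimes$-faithful.

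The main obstacle I expect is the ``only if'' direction, specifically producing a non-$\otimes$-faithful non-projective indecomposable cleanly for \emph{arbitrary}~$G$ with $p^2\mid|G|$: induction $M_0{\uparrow^G}$ from a Sylow-$p$ (or order-$p^2$) subgroup need not be indecomposable, and one must argue that \emph{every} indecomposable summand has all of its own summands-over-$\okk$ of $p$-divisible dimension, which is where I would lean on the vertex/source formalism — the source of such a summand is a $P$-module whose dimension is forced to be divisible by~$p$ because it is a source of $M_0$ or a related module, giving $p\mid\dim$ of the summand via the fact that a module's dimension is divisible by the index of its vertex times (dimension of source-related data). An alternative, slicker route avoiding vertices: simply note that it suffices to have \emph{some} non-projective indecomposable $M$ over $\okk G$ with $p\mid\dim M$, and produce it by a dimension-count on syzygies — since $p^2\mid|G|$, the group has infinitely many indecomposables or at least the periodic/non-periodic dichotomy forces the existence of an indecomposable of dimension divisible by~$p$ (e.g.\ $\Omega^n\kk$ for suitable~$n$, using $\sum(-1)^i\dim\Omega^i\kk$ type relations, or directly that not all indecomposables can have dimension prime to~$p$ when the Sylow subgroup is non-cyclic-of-order-$p$). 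I would present the vertex argument as the primary one since it is the most self-contained given standard modular representation theory.
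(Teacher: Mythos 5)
Your opening reduction (the radical is a $\otimes$-ideal if and only if every non-projective indecomposable is $\otimes$-faithful, which over $\okk$ means dimension prime to~$p$) is exactly the paper's strategy, but both representation-theoretic halves of your argument have genuine gaps. In the ``if'' direction, your ``more careful'' congruence tacitly assumes that the non-projective part of $M_{\downarrow P}$ is a \emph{single} block $[i]$, so that $\dim M=ap+i$. In general the restriction has several non-projective summands $[i_1],\dots,[i_k]$ (for example the $2$-dimensional simple module of $C_7\times S_3$ in characteristic~$7$ restricts to $[1]\oplus[1]$), and nothing you say rules out $i_1+\cdots+i_k\equiv 0\pmod p$; excluding that is precisely the content of the claim. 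The paper proves it by first treating the case $S\lhd G$ (uniserial structure, with $\dim(M/\Rad M)$ dividing $|G/S|$ over $\okk$) and then the general case by writing $M$ as a summand of $U^{\uparrow G}$ for $U$ indecomposable over $N=N_G(S)$, where Mackey and $S\cap xNx^{-1}=\{1\}$ for $x\notin N$ force $U^{\uparrow G}$ to have exactly one non-projective summand, whence $\dim M\equiv |G:N|\dim U\not\equiv 0$. Your appeal to ``defect group / Green correspondence'' points at this machinery but is never carried out, and the explicit argument you substitute for it is invalid as written.

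The ``only if'' direction is also not established. What you need is a non-projective $\kk G$-module $M$ such that \emph{every} indecomposable summand of $\okk\otimes M$ has dimension divisible by~$p$; then Proposition~\ref{prop:non-ff} applies over the given field~$\kk$. Producing an example only over $\okk$, or knowing merely that $p\mid\dim M$, does not suffice for general $\kk$, and your proposed mechanisms --- ``composition factors carry the $p$-divisibility'' and a vertex/source dimension-divisibility principle --- do not prove that the indecomposable summands of $M_0^{\uparrow G}$ (let alone of their scalar extensions) all fail to be $\otimes$-faithful: $\otimes$-faithfulness is governed by direct-summand structure, e.g.\ whether $\kk$ splits off $M^\vee\otimes M$, not by composition factors. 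The paper's choice avoids all of this: take $M=\kk_Q^{\uparrow G}$ for $Q$ of order exactly~$p$ and restrict to a Sylow subgroup $S\supseteq Q$; by Mackey every indecomposable summand of $M_{\downarrow S}$ has the form $\kk_{S\cap xQx^{-1}}^{\uparrow S}$, hence is absolutely indecomposable of dimension divisible by~$p$ (this is where $|S|\ge p^2$ enters), and the $x=1$ summand is non-projective; Theorem~\ref{thm:tracemap} together with Proposition~\ref{prop:xfaithful} then shows that a non-projective summand of $M$ is not $\otimes$-faithful over $\kk$ itself. You could repair your sketch by switching to this choice of $M$, but as proposed both directions leave unproved exactly the steps that constitute the theorem.
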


\begin{proof}
First suppose that $p^2$ divides the order of $G$. Let $Q$ be a subgroup
of order $p$ in $G$ and let $M = \kk_Q^{\uparrow G} =
\kk G \otimes_{\kk Q} \kk_Q$ where $\kk_Q$ is the trivial $\kk Q$-module.
Let $S$ be a Sylow $p$-subgroup of $G$ that contains $Q$.
By the Mackey Theorem, we have that
\[
M_{\downarrow S} \quad  \simeq \quad \bigoplus_{SxQ}
\kk_{S \cap xQx^{-1}}^{\uparrow S}
\]
where the sum is over a collection of representatives of the
$S$-$Q$ double cosets in~$G$.
The modules $\kk_{S \cap xQx^{-1}}^{\uparrow S}$ are absolutely indecomposable, have
dimension divisible by $p$ and are not projective if $S \cap xQx\inv \neq \{1\}$.
Hence, some non-projective summand of $M$ must fail to be $\otimes$-faithful,
and the Kelly radical is not tensor closed by Proposition~\ref{prop:non-ff}.

On the other hand, suppose that a Sylow $p$-subgroup $S$ of $G$ is cyclic
of order~$p$. We show that every non-projective
indecomposable $\okk G$-module has
dimension prime to~$p$. This implies that every $\kk G$-module is
$\otimes$-faithful by Proposition~\ref{prop:xfaithful}
\eqref{it:pdiv3}\then\eqref{it:x-1}. So assume that $\kk = \okk$.
Let $S = \langle h \rangle$, and $t = h-1$, so that $\kk S = \kk[t]/(t^p)$.

First consider the case that $S$ is normal in $G$. Let $M$ be an
indecomposable $\kk G$-modules. Then it is known that $M$ is uniserial,
meaning that the subsets $M_i = t^iM$ are
$\kk G$-submodule for $i = 0, \dots, p-1$, and the
quotients $M_i/M_{i+1}$ are all irreducible and conjugate to
one another. Moreover, because $M$ is not projective, $M_{p-1} = \{0\}$.
Thus the dimension of $M$ is $r\cdot\dim(M/M_1)$ where $r$ is the least integer
such that $M_r = \{0\}$.  The quotient $M/M_1$ is an irreducible
$\kk G/S$-module. Because $\kk$ is algebraically closed, the dimension of
$M/M_1$ divides $\vert G/S \vert$ and is prime to~$p$
(see \cite{CurtisReiner} (33.7) which applies also in this case).
Hence $M$ has dimension prime to~$p$.

If $S$ is not normal in $G$, then let $N = N_G(S)$.
Let $M$ be a non-projective
indecomposable $\kk G$-module. Then $M$ is a direct summand of
$U^{\uparrow G}$ for $U$ an indecomposable  $\kk N$-module that
is a direct summand of
the restriction $M_{\downarrow N}$. Note that $U$ is not projective
as otherwise $M$ is also projective. Thus, by the previous case,
the dimension of $U$  is not divisible by $p$.  By the Mackey Theorem
\[
(U^{\uparrow G})_{\downarrow S} \ \simeq \
\bigoplus_{SxN} ((x \otimes U)_{\downarrow S \cap xNx\inv})^{\uparrow S}
\]
where the sum is over a set of representatives of the $S$-$N$ double cosets in~$G$.
But notice that $S \cap xNx\inv = \{1\}$ if $x \not\in N$. Hence,
$U^{\uparrow G}$ can have only one non-projective direct summand which must
be~$M$.
All other direct summands must have dimension divisible by~$p$.
Because $\dim(U^{\uparrow G}) = \vert G:N \vert \  \dim(U),$ we have that $p$
does not divide the dimension of~$M$.
\end{proof}

\begin{Exa}
\label{exa:C_p^n}%
Let $\cat C=\kk C_{p^n}\sstab$ the stable module category over a
cyclic $p$-group~$C_{p^n}=\langle \,g\Mid g^{p^n}=1\,\rangle$.
For $\kk$ of characteristic~$p$, we have a ring isomorphism
$\kk C_{p^n}\isoto \kk [t]/t^{p^n}$ given by $g\mapsto t+1$.
Every indecomposable module has the form
\[
[i]:=\kk [t]/t^i
\]
for $i=1,\ldots, p^{n}$, the last one being projective. Hence,
the indecomposable objects in the stable category~$\cat C$ are
the $[i]$ for $1\le i\le p^{n}-1$. The Kelly radical of~$\cat C$
is generated by the morphisms $\alpha_i:[i]\to [i+1]$ given by
multiplication by~$t$ and the morphisms $\beta_i:[i]\to [i-1]$
given by the projection. In particular, the radical of
$\Endcat{C}([i])$ is generated by the morphism
$\beta_{i+1}\alpha_i:[i]\to [i]$ given by multiplication by~$t$.
The modules are all absolutely indecomposable so that none of
this depends heavily on the field~$\kk$,
as long as it has characteristic~$p$, of course.
Consequently, the Kelly radical is preserved under field
extensions $\kk C_{p^n}\sstab\too \kk' C_{p^n}\sstab$.

Consider the quotient
$\cat C\ \overset{Q}\onto\ \cat D:=\cat C/\Radcat{C}$ of~\eqref{eq:Q}.
In this example, the category~$\cat D$
consists simply of $p^n-1$ copies of the category of $\kk$-vector
spaces, since we have $\End_{\cat D}(Q([i]))\simeq \kk$ for all~$i$
and $\Hom_{\cat D}(Q([i]),Q([j]))=0$ for $i\neq j$. There is a
natural component-wise tensor on~$\cat D$ in this example.
However, this tensor on~$\cat D$ never makes the quotient
functor $Q:\cat C\to \cat D$ into a $\otimes$-functor, except
for $G=C_2$ where $Q$ is an isomorphism. For $n=1$, we
have seen that $\Radcat{C}$ is a $\otimes$-ideal, hence there
is \emph{another} tensor structure on~$\cat D$ which makes $Q$
a $\otimes$-functor. For $n\ge 2$, the radical is simply not
a $\otimes$-ideal. (See Theorem~\ref{thm:rad}.)

In the case of $G = C_{p}$, the fact that the Kelly radical of the
stable category is a tensor ideal can also be deduced from the
tensor formula, for $i\le j$:
\begin{equation}
\label{eq:i@j}%
[i]\otimes [j]\simeq\left
\{\begin{array}{cl}{} [j-i+1]\oplus [j-i+3] \oplus \cdots\oplus [j+i-1]
\kern2em & \textrm{if } i+j\le p\\{} [j-i+1]\oplus [j-i+3]
\oplus \cdots\oplus [2p-i-j-1] & \textrm{if } i+j > p.
\end{array}\right.
\end{equation}
This formula is a consequence of a calculation of
Premet~\cite{Premet91}.
See~\cite[Cor.\,10.3]{CarlsonFriedlanderPevtsova08} for details.
Observe that all indecomposable summands~$[k]$ of
$[i]\otimes [j]$ have the same parity as $j-i+1$ (even for $i\ge j$ of course since $\otimes$ is symmetric).
In particular, every morphism $f$ between $[i]\otimes [j]$
and $[i\pm1]\otimes [j]$ belongs to the radical since no
summand of the source of~$f$ is isomorphic to any summand of
its target. On the other hand, we saw that the radical is
generated by the morphisms $\alpha_i:[i]\to [i+1]$
(multiplication by~$t$) and $\beta_i:[i]\to [i-1]$ (projection).
It follows that $\alpha_i\otimes [j]$ and $\beta_i\otimes[j]$
belong to the radical for all~$j$.
\end{Exa}

\goodbreak
\section{The case of the group of prime order}
\label{se:C_p}%
\medbreak

Let $p$ be a prime, $C_p$ the cyclic group of order~$p$ and $\kk$
a field of characteristic~$p$.

\begin{Thm}
\label{thm:C_p}%
Let $A\in\kk C_p\SStab$ be a tt-ring in the (big) stable module
category. Then there exists finitely many finite separable
field extensions $L_1,\ldots, L_n$ over~$\kk$ such that
$A\simeq L_1\times\cdots\times L_n$ as tt-rings in~$\kk C_p\SStab$,
where $L_1\times\cdots\times L_n$ is equipped with trivial $C_p$-action.
\end{Thm}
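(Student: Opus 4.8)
The plan is to reduce Theorem~\ref{thm:C_p} to the abelian/field situation that is already understood, using the structural results of Section~\ref{se:Kelly} together with the vanishing of symmetric powers $S^{p-1}[i]$ for $i>1$ (Almkvist--Fossum). First I would reduce to the case of a separably closed field $\kk$: since extension of scalars $\kk C_p\SStab\to L C_p\SStab$ along a separable extension $L/\kk$ is a faithful $\otimes$-functor, and since over a separably closed field the only finite separable extensions are trivial, the general statement follows from the separably closed case by a descent/Galois argument on the idempotent decomposition. So assume $\kk=\kk^{\mathrm{sep}}$ and aim to show $A\simeq\kk\times\cdots\times\kk$.

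Next I would pass to the underlying module, writing $A\simeq\bigoplus_i m_i\,[i]$ in $\kk C_p\mmod$ (with $[p]$ projective, so killed stably), and analyse the multiplication $\mu:A\otimes A\to A$ and section $\sigma:A\to A\otimes A$. The key point is that $A$ cannot have an indecomposable summand $[i]$ with $1<i<p$: by Example~\ref{exa:C_p^n} and Theorem~\ref{thm:rad} (the case $p\parallel |G|$, so the Kelly radical \emph{is} a $\otimes$-ideal for $C_p$), the quotient functor $Q:\cat C\to\cat C/\Radcat{C}$ is a $\otimes$-functor onto a product of copies of $\kk\text{-}\mathrm{vect}$. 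Applying $Q$ to the ring-object $A$ produces a separable commutative ring in $\prod_{i=1}^{p-1}\kk\text{-}\mathrm{vect}$, which forces the "support" of $A$ to be concentrated at indices where the tensor closes up, i.e.\ only at $i=1$ — here is where Formula~\eqref{eq:i@j} and the parity observation are used, since $[i]\otimes[i]$ for $i>1$ has all summands of dimension $>1$ and one needs the unit $u:\unit\to A$ to hit the $[1]$-component; combined with Proposition~\ref{prop:no-nil}(b), any nilpotent ideal (which the non-$[1]$ part generates, using $S^{p-1}[i]$ projective) must vanish. So stably $A\simeq\bigoplus^n[1]=\kk^n$ as a module.

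Finally I would identify the ring structure on $A\simeq\kk^n$ with trivial action. Since $[1]=\unit$ in $\cat C$ and $\Endcat{C}(\unit)\simeq\kk$, the object $\kk^n$ carries only the obvious $\kk$-algebra structures up to isomorphism of ring-objects; commutativity and separability over the separably closed field $\kk$ (Proposition in the Introduction, via \cite[\S\,II.2]{DeMeyerIngraham71}) force $A\simeq\kk\times\cdots\times\kk$ as claimed. Lifting back along $L/\kk$ and reassembling the Galois action on idempotents gives the general $L_1\times\cdots\times L_n$.

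\textbf{Main obstacle.} The delicate step is ruling out summands $[i]$ with $1<i<p$: one has to control the interaction of $\mu$ and $\sigma$ with the $\otimes$-decomposition~\eqref{eq:i@j}, and in particular show that no "exotic" separable structure can be supported on such $[i]$. I expect this is exactly where the projectivity of the symmetric power $S^{p-1}[i]$ enters — it provides the nilpotence needed to invoke Proposition~\ref{prop:no-nil}(b) — and getting the bookkeeping of multiplicities and parities right is the technical heart of the argument.
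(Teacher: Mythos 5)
Your overall strategy is the paper's: pass to the quotient by the Kelly radical (a $\otimes$-ideal for $C_p$ by Theorem~\ref{thm:rad}), work in the semisimple quotient $\cat D\simeq\bigoplus_{i=1}^{p-1}(\kk\MMod)_i$, use the projectivity of $S^{p-1}[i]$ for $i>1$ to produce nilpotence, and kill the resulting ideal with Proposition~\ref{prop:no-nil}\,(b). But the step you yourself flag as the ``main obstacle'' is precisely the content of the proof, and your sketch of it does not work as stated. The parity/support mechanism you propose is false: by Formula~\eqref{eq:i@j}, $[2]\otimes[2]\simeq[1]\oplus[3]$ when $p\ge 5$, so the non-$[1]$ part of $A$ is \emph{not} closed under multiplication, and one cannot conclude that the ``support concentrates at $i=1$'' from the tensor decomposition alone. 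What is missing is the argument that each summand $Q([i])$, $i>1$, of $Q(A)$ is nilpotent: one must use \emph{commutativity} of $\mu$ to see that the restriction of the iterated multiplication $A^{\otimes(p-1)}\to A$ to $[i]^{\otimes(p-1)}$ is invariant under the $\mathfrak{S}_{p-1}$-action permuting the factors; since $(p-1)!$ is invertible in $\kk$, the fixed subobject of $[i]^{\otimes(p-1)}$ is cut out by the idempotent $\tfrac{1}{(p-1)!}\sum_s s$, is preserved by the quotient functors, and equals $S^{p-1}[i]$, which is projective (Almkvist--Fossum) and hence vanishes in $\cat D$; finally, an $\mathfrak{S}_{p-1}$-invariant map out of an object with zero fixed part in the semisimple quotient must be zero (Remarks~\ref{rem:M^G-M_G}--\ref{rem:sym}). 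Only this chain produces the nilpotence that feeds Proposition~\ref{prop:no-nil}\,(b); asserting that projectivity of $S^{p-1}[i]$ ``provides the nilpotence'' without this equivariance and averaging argument leaves the proof incomplete.

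Two secondary points. First, your opening reduction to separably closed $\kk$ via a ``descent/Galois argument on idempotents'' is both unjustified (faithfulness of scalar extension does not by itself give descent of the ring structure) and unnecessary: once $A$ is shown to have trivial $C_p$-action it lies in the image of the fully faithful tensor functor $\kk\MMod\hook\kk C_p\SStab$, and the classification of commutative separable algebras over an \emph{arbitrary} field (DeMeyer--Ingraham, Neeman) directly yields $A\simeq L_1\times\cdots\times L_n$ with the $L_j$ finite separable extensions. Second, since you work in the big category $\kk C_p\SStab$, the decomposition $A\simeq\bigoplus_i m_i[i]$ and the applicability of the radical-quotient argument need the fact that every object is a coproduct of finite-dimensional indecomposables (Crawley--J\'onsson, Warfield), which should be cited.
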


We need the following general preparations.
\begin{Rem}
\label{rem:M^G-M_G}%
Let $S$ be a finite group whose order is invertible in~$\kk$. Let $M$ be
a finite dimensional $\kk S$-module. Suppose that $M^S=0$, meaning $M$
has no non-trivial $S$-fixed vector. Then there is also no
nonzero $kS$-homomophism
$M \to \kk$, since such a map
would  split by semisimplicity of~$\kk S$, and thus $\kk$ would be a
direct summand of~$M$. It follows that
any $\kk$-linear map $\nu:M\to M'$ such that $\nu(s m)=\nu(m)$
for all~$m\in M$
and all $s\in S$ must be zero, since such a map $\nu$ has to
factor through a trivial $kS$-module.

In the proof of Theorem~\ref{thm:C_p}, we use the above argument in a slightly
more general setting, where $M$ is an object of a category
$\cat D=\oplus_{i=1}^m (\kk\MMod)_i$ obtained by
taking a finite (co)product of copies of the category~$\kk\MMod$
(as additive categories). Since
two copies of~$\kk\MMod$ for different indices have no non-zero
morphisms between them in~$\cat D$,
one easily reduces to the above case.
\end{Rem}

\begin{Rem}
\label{rem:fixed}%
More generally, let $\cat C$ be a $\kk$-linear idempotent-complete
category, and let $S$ be a finite group whose order is invertible
in~$\kk$. Let $M$ be an object of~$\cat C$ on which $S$ acts,
in the sense that we have a group homomorphism $S\to \Aut_{\cat C}(M)$.
We can then describe the $S$-fixed subobject $M^S$ as an explicit
direct summand of~$M$, namely the summand corresponding to the idempotent
endomorphism given by the image of the central idempotent
$e=\frac{1}{|S|}\sum_{s\in S}s$ of~$\kk S$ in~$\End_{\cat C}(M)$.
So we have $M^S=e\cdot M=\ker(1-e)$ and $M=e\cdot M\oplus (1-e)\cdot M$.
If now $F:\cat C\to \cat D$ is a $\kk$-linear
functor between such categories, it follows from the above description
that $(F(M))^S = e\cdot F(M)\simeq F(e\cdot M) = F(M^S)$, as long
as $F(M)$ is equipped with the obvious $S$-action
$S\to \Aut_{\cat C}(M)\to \Aut_{\cat D}(F(M))$ induced by~$F$.
\end{Rem}

\begin{Rem}
\label{rem:sym}%
The above ideas are applied below to the symmetric group~$S=\Spmo$
on $p-1$ letters acting on an object $M$ in a $\kk$-linear
category~$\cat C$, where $p>0$ is the
characteristic of~$\kk$. The three $\kk$-linear categories we use are
in turn the module category $\kk C_p\MMod$, the stable
category~$\kk C_p\SStab$ and
finally its quotient $\cat D=\kk C_p\SStab/\Rad$ by
the Kelly radical. The object $M$ with an action of~$S=\Spmo$
is $M=[i]\potimes{(p-1)}$ with action by permutation of the factors;
and we also consider the images of~$M$ under the quotient functors
$P:\kk C_p\MMod\to \kk C_p\SStab$ and
$Q:\kk C_p\SStab\to \cat D$. Since both functors are quotient
functors, \ie only change the morphisms,
the object in question remains the ``same" $[i]\potimes{(p-1)}$,
if one wishes. By Remark~\ref{rem:fixed}, its $S$-fixed sub-object~$M^S$
is preserved by the functors~$P$ and~$Q$.
\end{Rem}

\begin{proof}[Proof of Theorem~\ref{thm:C_p}]
When $p=2$, the category $\kk C_2\SStab$ is equivalent
to $\kk\MMod$. That is, the only non-projective indecomposable module is
the trivial module $\kk$ and every module is stably isomorphic to a
coproduct of trivial modules. See~\cite{CrawleyJonsson64} or~\cite{Warfield69}.
Thus, the theorem is trivially true in
this case. As a consequence,  we assume hereafter that $p$ is odd.

Consider the additive quotient of~$\kk C_p\SStab$ by its
Kelly radical:
\[
\xymatrix{
\kk C_p\SStab \ar@{->>}[r]^-{Q}
& \cat D:=\frac{\Displ \kk C_p\SStab }{\Displ \Rad(\kk C_p\SStab )}
}
\]
By Theorem~\ref{thm:rad} (or Example~\ref{exa:C_p^n}), this Kelly radical is a tensor-ideal.
This also uses the fact that every object of~$\kk C_p\SStab$ is a coproduct of finite-dimensional ones, see~\cite{CrawleyJonsson64,Warfield69} again.
Therefore the above functor~$Q$ is a
tensor functor. Hence, $B:=Q(A)$ is a separable commutative
ring in~$\cat D$.

The quotient category~$\cat D$ is actually abelian semisimple.
Indeed, in the tt-category $\kk C_p\SStab$ every object is a (possibly infinite) coproduct of
finite dimensional indecomposables and there are $p-1$ indecomposables up to isomorphism:
$[1]=\kk$, $[2]$, \ldots, $[p-1]$; furthermore for all $i\neq j$, we
have $\Rad([i],[j])=\Hom([i],[j])$ and we have
$\Hom([i],[i])/\Rad([i],[i])\simeq \kk$. This means that
the quotient
\[
\cat D \simeq \bigoplus_{i=1}^{p-1} \big(\kk\MMod)_i
\]
is a (co)product of copies of the category of $\kk$-vector
spaces, indexed by~$i=1,\ldots, p-1$. (Finite coproducts of additive categories coincide with their products.)
The subtlety about the quotient category~$\cat{D}$ comes from its tensor product, which is governed by Formula~\eqref{eq:i@j}.

Now, choose $1<i<p$ and suppose that $A$ has a copy of~$[i]$
among its direct summands (in~$\kk C_p\SStab$). Consider the
$\otimes$-power $M:=[i]\potimes{(p-1)}$ in~$\kk C_p\MMod$, with the obvious
action of the symmetric group~$S=\mathfrak{S}_{p-1}$ on
$p-1$ letters by permuting the factors as announced in
Remark~\ref{rem:sym}. Since $(p-1)!$ is invertible in~$\kk$,
we can describe the fixed subobject $M^S$ as in Remark~\ref{rem:fixed}.
Indeed, $(p-1)! = -1$ in $\kk$. So in other words, the
symmetric power of~$[i]$ equals
$S^{p-1}[i]=M^S=e\cdot M$ where $e= -\sum_{s\in\mathfrak{S}_{p-1}}s$.

By Remark~\ref{rem:sym}, we have $QP(M)^S\simeq QP(M^S)$
in~$\cat D$. However, by the work of Almkvist and
Fossum~\cite{AlmkvistFossum78} we have that $M^S=S^{p-1}[i]$ is
projective for $i\ge 2$, as we assume here. Hence, $P(M^S)\simeq 0$
in~$\kk C_p\SStab$ and therefore $QP(M)^S=0$ as well.
We can then apply Remark~\ref{rem:M^G-M_G} to the object $QP(M)$,
which is really just $[i]\potimes{(p-1)}$ but now viewed
in~$\cat D$. By that remark, the morphism
$\nu:QP([i]\potimes{p-1})\to Q(A\potimes{p-1})\otoo{Q\mu}Q(A)$
is zero, since it satisfies $\nu\circ s=\nu$ for all~$s\in\Spmo$
by commutativity of~$\mu$.

In summary, we have shown that every direct summand
$Q([i])$ of $B=Q(A)$ with $i>1$ has to be nilpotent in
the separable commutative ring-object~$B$ of~$\cat D$.
Let now $I\subseteq B=Q(A)$ be the ideal generated by all direct
summands $Q([i]) \subseteq Q(A)$ for $i>1$ in the semisimple abelian
category~$\cat D$. Because the category $\cat D$ is abelian and
semisimple, this ideal~$I$ consists of the sum of the images $\mu(U \otimes V)$ where $U$ is
any summand of $B$ and $V$ is any direct summand of $B$ that is
isomorphic to~$Q([i])$.
By the above discussion, this ideal~$I\subseteq B$ is nilpotent.
Hence, it must be zero by Proposition~\ref{prop:no-nil}\,(b).
This shows that $A$ has no direct summand isomorphic
to~$[i]$ for $i>1$.

Thus we have proved that $A$ is a $\kk C_p$-module with trivial
$C_p$-action, and it belongs to the image of the
fully faithful tensor functor
$\pi^*:\kk\MMod\hook \kk C_p\SStab$ (where $\pi:C_p\to 1$) and
we reduce again to the field case.
\end{proof}

\goodbreak
\section{The case of the general cyclic group}
\label{se:general}%
\medbreak

Let $\kk$ be a field of characteristic~$p>0$, and let $C_{p^n}$
be the cyclic group of order~$p^n$ for $n\ge 1$.
The following statement implies the theorem given in the Introduction:

\begin{Thm}
\label{thm:main}%
Let $A\in\kk C_{p^n}\sstab$ be a tt-ring in the stable module
category. Then there exists a commutative and separable ring-object $A$ in
$\kk C_{p^n}\mmod$ whose image in the stable category is~$A$.
Explicitly, if we assume~$\kk$ separably closed, there exist
a finite $C_{p^n}$-set~$X$ such that $A\simeq \kk X$, or equivalently
there exist subgroups $H_1,\ldots,H_r\le G$ such that
$A\simeq A^G_{H_1}\times\cdots \times A^G_{H_r}$ (see Example~\ref{exa:k(G/H)}).
\end{Thm}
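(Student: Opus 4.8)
The plan is to reduce the general cyclic $p$-group case to the already-established case of $C_p$ (Theorem~\ref{thm:C_p}), by induction on~$n$. Throughout I fix $G=C_{p^n}$ and its unique subgroup chain $1=G_0<G_1<\cdots<G_n=G$ with $|G_i|=p^i$, and I write $N=G_1$ for the unique subgroup of order~$p$.

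\medbreak

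\emph{First step: base change to $C_p$ detects the summands of~$A$.}
Consider the restriction functor $\Res^G_N:\kk G\sstab\to \kk N\sstab$, which is a tensor functor, so $\Res^G_N(A)$ is a tt-ring in $\kk N\sstab=\kk C_p\sstab$. By Theorem~\ref{thm:C_p} (in its finite-dimensional incarnation, or by the big version applied and then cut down), $\Res^G_N(A)\simeq \kk\times\cdots\times\kk$ with trivial $N$-action, i.e. $\Res^G_N(A)$ lies in the image of $\pi_N^*:\kk\MMod\hook\kk N\sstab$ where $\pi_N\colon N\to 1$. In the Krull--Schmidt category $\kk G\mmod$, every indecomposable is $[i]=\kk[t]/t^i$ for $1\le i\le p^n-1$; I compute $\Res^G_N[i]$ explicitly (write $i=ap+b$ with $0\le b<p$: one gets $b$ copies of $[p]$ — which is the free $\kk N$-module, hence zero stably — away from $b$, and the stable part is determined by $i\bmod p$ up to projectives, concretely $\Res^G_N[i]$ is stably a sum of copies of $[1],\dots,[p-1]$). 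The key point is that $\Res^G_N[i]$ is \emph{stably nonzero and not a sum of trivial modules} unless $i\equiv 0$ or $i\equiv 1\pmod p$; indeed $[i]$ restricts to have a summand $[b]$ with $2\le b\le p-1$ whenever $i\not\equiv 0,1$. Since $\Res^G_N(A)$ has only trivial summands, no $[i]$ with $i\not\equiv 0,1\pmod p$ can occur as a summand of~$A$.

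\medbreak

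\emph{Second step: kill the summands $[i]$ with $i\equiv 0\pmod p$ by a no-nilpotents argument.}
This is the part that will require the Almkvist--Fossum-type input and is the main obstacle. For $1<i<p^n$ with $p\mid i$, the module $[i]$ is not $\otimes$-faithful: its dimension is divisible by~$p$, so by Proposition~\ref{prop:xfaithful}, $\eqref{it:pdiv3}\Rightarrow$(not $\otimes$-faithful), and hence by Proposition~\ref{prop:non-ff}, $1_{[i]}\in\Ratcat{C}$. Passing to the quotient $\cat D=\kk C_{p^n}\sstab/\Rat$ (which by Theorem~\ref{thm:tens-rad2} is a tensor category and $Q:\cat C\onto\cat D$ a tensor functor), $Q([i])=0$, so $Q(A)$ has no such summand to begin with. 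Combined with the first step applied after the tensor-quotient is still delicate, so instead I argue directly: let $I\subseteq A$ be the ideal generated, in the abelian semisimple quotient appropriate to $\kk G\sstab$ — actually $\kk G\sstab$ is \emph{not} semisimple for $n\ge 2$, so here is where I must work harder. The cleanest route: use that $A$ is separable, hence $A\MModcat{K(G)}$ is again a tt-category, and transport the problem to the subgroup via $A^G_N\MModcat{K(G)}\simeq\kk N\sstab=\cat K(N)$ as recalled in the Introduction. One shows $A$ decomposes compatibly with the idempotents detected over~$N$; the summands of $A$ surviving in $\cat K(N)$ are trivial, forcing $A$ itself, after removing projectives, to be an extension built only from $[i]$ with $i\equiv 1\pmod p$, i.e. from $[1]=\kk$ and modules restricting trivially to~$N$.

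\medbreak

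\emph{Third step: induction.}
Having shown (modulo the analysis above) that $A$ restricts to a trivial tt-ring on~$N$, I claim $A$ is inflated from $G/N\simeq C_{p^{n-1}}$. Indeed a $\kk G$-module all of whose indecomposable summands have dimension $\equiv 0$ or $1\pmod p$ and which is stably trivial on restriction to~$N$ is, up to projectives, an $N$-fixed module, i.e. in the image of the fully faithful tensor functor $\Infl_{G/N}^{G}:\kk(G/N)\sstab\hook\kk G\sstab$ (this uses that for $\kk N$ with $|N|=p$, a module is stably trivial iff it is $\kk$ plus projectives, together with a standard filtration/Nakayama argument identifying $N$-trivial-mod-projective $\kk G$-modules with $\kk(G/N)$-modules mod projectives). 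Then $A$ comes from a tt-ring $\bar A$ in $\kk C_{p^{n-1}}\sstab$; by the inductive hypothesis $\bar A\simeq\prod_j A^{G/N}_{\bar H_j}$ for subgroups $\bar H_j\le G/N$, and inflating gives $A\simeq\prod_j A^G_{H_j}$ where $H_j$ is the preimage of $\bar H_j$ (here one uses $\Infl(A^{G/N}_{\bar H})\simeq\kk(G/H)=A^G_{H}$). The base case $n=1$ is Theorem~\ref{thm:C_p}. Finally, lifting to $\kk C_{p^n}\mmod$ is automatic since each $A^G_{H_j}=\kk(G/H_j)$ is already a module, and the equivalence with $\kk X$ for $X=\coprod_j G/H_j$ is Example~\ref{exa:k(G/H)}.

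\medbreak

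The serious difficulty, and where I expect the real work, is the second step for $n\ge 2$: unlike the $C_p$ case, $\cat K(G)=\kk C_{p^n}\sstab$ is \emph{not} semisimple after killing the radical in a way compatible with $\otimes$, so the clean ``nilpotent ideal $I$ must vanish'' argument of the proof of Theorem~\ref{thm:C_p} does not port verbatim; one needs either a more careful idempotent-splitting of~$A$ using separability (Proposition~\ref{prop:no-nil}(a)) along the restriction $\Res^G_N$, or the extra input — flagged in the Introduction as ``only the case of $C_4$ requires an extra argument'' — handling the prime~2 phenomenon via Proposition~\ref{prop:super} about $\bbZ/2$-graded vector spaces. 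I would structure the proof so that the generic induction goes through by the restriction-to-$N$ plus inflation mechanism, and isolate $C_4$ (or small primes) as a separate lemma invoking the super-vector-space computation.
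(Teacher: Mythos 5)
Your coarse strategy does match the paper's: restrict to the unique subgroup $N=C_p$, invoke Theorem~\ref{thm:C_p}, deduce that $A$ is inflated from $G/N$, and treat a small-$p$ anomaly via the $\bbZ/2$-graded computation of Proposition~\ref{prop:super}. But the execution has concrete errors and the key step is missing. First, your restriction formula is wrong: writing $i=ap^{n-1}+b$ with $0\le b<p^{n-1}$, one gets $\Res^G_N[i]\simeq [a+1]^{\oplus b}\oplus[a]^{\oplus(p^{n-1}-b)}$ as $\kk N$-modules, so the stable restriction is governed by comparing $i$ with $p^{n-1}$, not by $i\bmod p$; the asserted consequence ``no summand $[i]$ with $i\not\equiv 0,1\pmod p$'' is unfounded. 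Second, the goal of your second step is false, not just hard: $A^G_N=\kk(G/N)\simeq[p^{n-1}]$ is itself a tt-ring whose every summand has dimension divisible by~$p$, so tt-rings certainly can have such summands. Observing that $Q([i])=0$ in $\cat C/\Rat$ says nothing about summands of $A$ itself, and the alternative sketch via $A^G_N\MModcat{K(G)}\simeq\cat K(N)$ is not an argument; you acknowledge as much.

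The ingredient you are missing is the upgrade from ``$\Res^G_N A$ is \emph{stably} trivial'' (which Theorem~\ref{thm:C_p} gives) to ``a representative $M$ of $A$ without projective summands has \emph{honestly} trivial $N$-action,'' which is what Proposition~\ref{prop:infl} needs. The paper gets this from Proposition~\ref{prop:res-dim}: if $M$ has no projective $\kk G$-summand but $\Res^G_H M$ has one, then $\Res^G_H M$ has a summand of dimension $|H|-1$; for odd $p$ this immediately excludes projective $\kk C_p$-summands, and for $p=2$ one first applies Proposition~\ref{prop:C_4} to $\Res^G_{C_4}M$ (this is exactly the $C_4$ fringe case you flag but do not carry out). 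Your substitute — that stably trivial restriction plus a dimension condition yields an inflated module ``up to projectives'' by a filtration/Nakayama argument — is false for $p=2$: every $\kk C_{2^n}$-module restricts stably trivially to $C_2$, yet $[3]\in\kk C_4\sstab$ is not inflated. Finally, your induction inflates from $\kk(G/N)\sstab$, but inflation does not preserve projectives (e.g.\ $\kk[G/N]$ inflates to the non-projective $[p^{n-1}]$), so it is not a functor on stable categories; the paper instead inflates from the \emph{ordinary} category $\kk\bar G\mmod$ (Proposition~\ref{prop:infl}) and concludes by the module-category classification of \cite[Rem.\,4.6]{Balmer15}, with no induction needed beyond the $C_p$ case. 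As it stands, the proposal has genuine gaps at precisely the points where the paper does its real work.
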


We need a little preparation.

\begin{Prop}
\label{prop:res-dim}%
Suppose that $G = \langle g \rangle$ is a cyclic group of order $p^n>1$ and
$H = \langle g^{p^{n-m}} \rangle$ is the
subgroup of order $p^{m}>1$. Suppose that $M$
is a $\kk G$-module having no nonzero
projective summands and suppose that the restriction
$M_{\downarrow H}$ has a nonzero $\kk H$-projective summand.
Then $M_{\downarrow H}$ has an
indecomposable summand of dimension $p^{m}-1 = \vert H \vert -1$.
\end{Prop}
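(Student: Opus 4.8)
The plan is to reduce the statement to the case of a single indecomposable module and then to compute the restriction $[i]_{\downarrow H}$ by hand. Identify $\kk G$ with $\kk[t]/(t^{p^n})$ via $t=g-1$; then the indecomposable $\kk G$-modules are exactly $[i]:=\kk[t]/(t^i)$ for $1\le i\le p^n$, the projective one being $[p^n]=\kk G$. Since $\kk G$ is local, $\kk G\mmod$ is Krull--Schmidt, so the hypothesis ``$M$ has no nonzero projective summand'' means $M\simeq\bigoplus_j[i_j]$ with each $i_j<p^n$, and ``$M_{\downarrow H}$ has a nonzero projective summand'' means, again by Krull--Schmidt, that $[i]_{\downarrow H}$ has a projective $\kk H$-summand for at least one of the $i=i_j$. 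Hence it suffices to prove: for $1\le i<p^n$, if $[i]_{\downarrow H}$ has a projective $\kk H$-summand then it also has an indecomposable summand of $\kk$-dimension $p^m-1=|H|-1$.

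First I would pin down how $H$ acts. Put $q:=p^{n-m}=|G/H|$. Since $\chara(\kk)=p$, Frobenius gives $g^{q}=(1+t)^{q}=1+t^{q}$ in $\kk G$, so if $s:=g^{q}-1$ denotes the standard generator of $\kk H$, then $s=t^{q}$ and $\kk H=\kk[s]/(s^{p^m})$, acting on $\kk G$-modules through multiplication by $t^{q}$. To decompose $[i]=\kk[t]/(t^i)$ over $\kk[s]=\kk[t^{q}]$, I partition its monomial $\kk$-basis $1,t,\dots,t^{i-1}$ according to the residue of the exponent modulo $q$: for each $r$ with $0\le r<q$, the nonzero classes $t^{r},t^{r+q},t^{r+2q},\dots$ span a cyclic $\kk[s]$-submodule generated by $t^{r}$ and isomorphic to $[\ell_r]_H$, where $\ell_r:=\lceil (i-r)/q\rceil$ counts those classes. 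These submodules partition a basis of $[i]$, whence
\[
[i]_{\downarrow H}\ \simeq\ \bigoplus_{r=0}^{q-1}\,[\,\lceil (i-r)/q\rceil\,]_H\,.
\]

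Finally I would read off the summands. An indecomposable $[\ell]_H$ is $\kk H$-projective precisely when $\ell=p^m$, and as $\ell_r$ is non-increasing in $r$ we have $\ell_r\le\ell_0=\lceil i/q\rceil$; since $i<p^n=p^m q$ this forces $\lceil i/q\rceil\le p^m$. Thus $[i]_{\downarrow H}$ has a projective summand if and only if $\lceil i/q\rceil=p^m$, i.e.\ if and only if $(p^m-1)q<i<p^m q$. Writing such an $i$ uniquely as $i=(p^m-1)q+b$ with $1\le b\le q-1$, a direct check of the ceiling gives $\ell_r=p^m$ for $0\le r<b$ and $\ell_r=p^m-1$ for $b\le r\le q-1$; since $b\le q-1$ the value $p^m-1$ genuinely occurs. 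Therefore $[p^m-1]_H$, of dimension $p^m-1=|H|-1$, is a direct summand of $[i]_{\downarrow H}$, hence of $M_{\downarrow H}$. There is no real obstacle here beyond keeping the arithmetic straight; the only conceptual point to isolate is that the indecomposable summand $[i]$ of $M$ responsible for a projective summand on restriction is forced to be ``large'', precisely $p^n-p^{n-m}<i<p^n$, and for exactly those values of $i$ the residue-class decomposition above unavoidably contributes, alongside the free $\kk H$-summands, at least one copy of $[\,|H|-1\,]_H$.
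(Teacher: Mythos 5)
Your proposal is correct and takes essentially the same route as the paper: reduce to an indecomposable $[i]$, split the restriction into the $q=p^{n-m}$ cyclic $\kk H$-submodules spanned by the standard basis vectors in each residue class of the exponent mod~$q$, and use the bounds $p^n-p^{n-m}<i<p^n$ forced by the hypotheses to produce a summand of dimension $p^m-1$. The only difference is cosmetic: you compute the full restriction formula with the ceiling function, whereas the paper exhibits a single generator $v_{s+1}$ whose cyclic $\kk H$-span has the required dimension.
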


\begin{proof}
We may assume without loss of generality that $M$ is indecomposable. Let $r$
be the dimension of~$M$. Let $t = g-1$,
so that $\kk G \simeq \kk [t]/(t^{p^n})$.
Then $M$ has a basis $v_1 , \dots, v_r$ such that $tv_i = v_{i+1}$ for all
$i = 1, \dots, r-1$ and $tv_r = 0$. For $i>r$, set by convention $v_i:=0$.
The algebra of the subgroup
$H$ is~$\kk [y]/(y^{p^{m}})$ where
$y = g^{p^{n-m}}-1 = t^{p^{n-m}}$. As a $\kk H$-module $M_{\downarrow H}$ is
generated by $v_1, \dots, v_{p^{n-m}-1}$, and $yv_i =
v_{p^{n-m}+i}$. If $r<p^{n-m}$, then the $\kk H$-module
$M_{\downarrow H}$ would be trivial; hence $r \ge p^{n-m}$.
It is then a straightforward exercise to show that
\[
M_{\downarrow H} \ \simeq \
\kk Hv_1 \oplus \kk Hv_2 \oplus \dots \oplus \kk Hv_{p^{n-m}}.
\]
The fact that $M_{\downarrow H}$ has a projective direct summand
means that $y^{p^m-1}M \neq \{0\}$ and therefore
$r > (p^{m}-1)p^{n-m} = p^n -p^{n-m}$.
Because $M$ is not projective $r < p^{n}$. Now write
$r = (p^{n}-p^{n-m}) +s$ where $1 \leq s < p^{n-m}$. Then we have that
$y^{p^m -1}v_{s+1} = v_{r+1} = 0$ and $y^{p^m -2}v_{s+1}\neq 0$. Thus the submodule $\kk Hv_{s+1}$ is an
indecomposable direct summand of $M_{\downarrow H}$
having dimension $p^m-1=\vert H\vert -1$
as asserted.
\end{proof}

This proposition is quite useful except in the fringe case where
$p=2$ and $n=2$. This is the unique case in which $p^{n-1}-1$ equals~1, and we handle it separately.
\begin{Prop}
\label{prop:C_4}%
Let $A\in\kk C_4\sstab$ be a tt-ring over the cyclic group of
order four, with $\chara(\kk)=2$. Then $A$ has no indecomposable
summand $[3]$ of dimension~3.
\end{Prop}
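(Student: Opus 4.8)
The plan is to push $A$ into the quotient of $\kk C_4\sstab$ by the tensor-closure $\Rat$ of the Kelly radical and to recognize that quotient as the category of $\bbZ/2$-graded $\kk$-vector spaces. Since $4=2^2$ divides the order of~$C_4$, the Kelly radical itself is \emph{not} a $\otimes$-ideal (Theorem~\ref{thm:rad}), so we genuinely need $\Rat$ rather than $\Rad$; but $\Rat$ is a $\otimes$-ideal by definition, and by Theorem~\ref{thm:tens-rad2} it coincides with the explicit ideal $\CI_s$ of Definition~\ref{def:rat-stab}. Writing $\cat D:=\kk C_4\sstab/\Rat$ for the quotient and $Q:\kk C_4\sstab\to\cat D$ for the quotient functor, which is a tensor functor (Remark~\ref{rem:tens-id}), we get that $B:=Q(A)$ is a commutative separable ring-object in~$\cat D$.

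Next I would identify $\cat D$ as a symmetric tensor category. The non-projective indecomposable $\kk C_4$-modules are $[1]=\unit$, $[2]$ and~$[3]$. The module $[2]$ is absolutely indecomposable of even dimension, hence not $\otimes$-faithful by Proposition~\ref{prop:xfaithful}\,\eqref{it:pdiv3}; so $\CI_s([2],[2])$ is all of $\Hom_{\kk C_4\sstab}([2],[2])$ and $[2]\simeq 0$ in~$\cat D$. By contrast $[1]$ and $[3]$ have dimension prime to~$2$, hence are $\otimes$-faithful (Example~\ref{exa:pdiv}), and they are bricks, $\End_{\kk C_4\sstab}([1])\simeq\kk\simeq\End_{\kk C_4\sstab}([3])$, so they survive in~$\cat D$ with endomorphism ring~$\kk$, while $\Hom_{\kk C_4\sstab}([1],[3])\subseteq\CI_s$; thus $\cat D\simeq\kk\MMod\times\kk\MMod$, the two factors indexed by $[1]$ and $[3]$. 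For the tensor structure: $[1]$ is the $\otimes$-unit, and $[3]$ is the augmentation ideal $t\,\kk C_4$, i.e. $[3]\simeq\Omega(\unit)$, so $[3]\otimes[3]\simeq\Omega^2(\unit)\oplus(\text{projective})\simeq\unit$ already in~$\kk C_4\sstab$ (using $\Omega^2\simeq\mathrm{Id}$ on~$\kk C_4\sstab$). Hence $\cat D$ is a $\bbZ/2$-graded category in which $[3]$ is the invertible line of degree one. Finally the symmetry constraint $[3]\otimes[3]\isoto[3]\otimes[3]$ is an automorphism of~$\unit$, hence a scalar $c\in\kk^{\times}$ with $c^2=1$; since $\chara(\kk)=2$ we get $c=1$, matching the Koszul sign. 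Therefore $\cat D$ is equivalent, as a symmetric tensor category, to the category of $\bbZ/2$-graded $\kk$-vector spaces.

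With this in hand the conclusion is immediate. By Proposition~\ref{prop:super}, the commutative separable ring $B$ in~$\cat D$ is concentrated in degree zero; equivalently, $B$ has no direct summand isomorphic to $Q([3])$. (Alternatively one re-runs the short idempotent argument of that proposition directly in~$\cat D$.) Since $Q$ is additive and $Q([3])\neq 0$ in~$\cat D$, a decomposition $A\simeq[3]\oplus A'$ in~$\kk C_4\sstab$ would force a nonzero degree-one part of~$B$, a contradiction. Hence $A$ has no indecomposable summand isomorphic to~$[3]$.

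The heart of the argument — and essentially the only work — is the middle step: checking that $[2]$ becomes zero in~$\cat D$ (via $\otimes$-faithfulness) and computing $[3]\otimes[3]\simeq\unit$ from $[3]=\Omega(\unit)$ together with $\Omega^2\simeq\mathrm{Id}$ on~$\kk C_4\sstab$. Once $\cat D$ is recognized as $\bbZ/2$-graded vector spaces, Proposition~\ref{prop:super}, which was tailored for precisely this situation, finishes the proof, and the potential sign subtlety disappears in characteristic~$2$.
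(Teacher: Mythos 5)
Your proof is correct and follows essentially the same route as the paper: kill $[2]$ by passing to a tensor quotient, identify the quotient with $\bbZ/2$-graded $\kk$-vector spaces using $[3]\otimes[3]\simeq\unit$, and conclude via Proposition~\ref{prop:super}. The only cosmetic difference is that you identify the relevant $\otimes$-ideal as $\Rat=\CI_s$ via Theorem~\ref{thm:tens-rad2}, whereas the paper constructs it directly as the morphisms factoring through sums of~$[2]$ (noting that $[2]\otimes[2]\simeq[2]\oplus[2]$) and merely remarks, without needing it, that this ideal coincides with~$\Rat$.
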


\begin{proof}
Use the Notation $[i]$ of Example~\ref{exa:C_p^n} for $i=1,2,3$.
Note that $[1]=\unit$ and $[3]\simeq \Sigma\unit$, so that
$[3] \otimes [3] = [1]$ in the stable category. This follows by
the tensor formula for shifts. Direct inspection shows that
$[2]\otimes[2]\simeq [2]\oplus [2]$. It follows that there
is a well-defined $\otimes$-ideal $\CI$ of morphisms
in~$\cat C=\kk C_4\sstab$ which consists of those morphism
which factor via some $[2]^{\oplus m}$ for some $m\ge 1$.
(One can verify that this is the ideal $\Ratcat{C}$,
but this is not essential.) The additive quotient
$\cat C\onto \cat C/\CI$ amounts to quotienting out
all objects~$[2]^{\oplus m}$ for $m\ge 1$. The resulting
category~$\cat C/\CI$ consists of two copies of
$\kk$-vector spaces, one generated by the image of~$[1]$
and one by the image of~$[3]=\Sigma[1]$, and its tensor
product is forced by $[3]\otimes[3]\simeq [1]$.
In other words, $\cat C/\CI$ is the category of finite
dimensional $\bbZ/2$-graded $\kk$-vector spaces.
We saw in Proposition~\ref{prop:super} that a separable
commutative ring-object in that category must be
concentrated in degree zero, \ie the image of~$A$ in~$\cat C/\CI$
contains no copy of~$[3]$.
\end{proof}

\begin{Prop}
\label{prop:infl}%
Let $N\lhd G$ be a normal subgroup of a finite group~$G$ and
assume that $p$ divides the order of $N$.
Consider $\pi:G\onto \bar G=G/N$ the corresponding quotient.
Then inflation $\Infl^{\bar G}_G=\pi^*:\kk \bar G\mmod\to \kk G\sstab$,
from the \emph{ordinary} module category of~$\bar G$ to the \emph{stable} category of~$G$,
is fully faithful and its essential image consists of those
objects isomorphic in~$\kk G\sstab$ to some $\kk G$-module~$M$ such that
$\Res^G_N M$ has trivial $N$-action.
\end{Prop}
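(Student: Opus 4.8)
The plan is to prove the two assertions separately. Full faithfulness is where the hypothesis $p\mid|N|$ is used, and the one genuinely nontrivial move there is to restrict everything to~$N$; the description of the essential image is then essentially formal.

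\textbf{Full faithfulness.} First I would record the standard fact that inflation into the \emph{ordinary} module category, $\pi^*\colon\kk\bar G\mmod\to\kk G\mmod$, is fully faithful: given $\bar G$-modules $V,W$, a $\kk$-linear map $V\to W$ is $\kk G$-linear for the inflated actions precisely when it is $\kk\bar G$-linear---this is immediate because $\pi$ is surjective---so $\Hom_{\kk G}(\pi^*V,\pi^*W)=\Hom_{\kk\bar G}(V,W)$ as subspaces of $\Hom_{\kk}(V,W)$. Consequently the composite $\kk\bar G\mmod\to\kk G\mmod\to\kk G\sstab$ is fully faithful as soon as $\Phom_{\kk G}(\pi^*V,\pi^*W)=0$ for all $V,W$; that is, as soon as no nonzero homomorphism between inflated modules factors through a projective $\kk G$-module.

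\textbf{The key step.} To get that vanishing I would apply $\Res^G_N$. This functor is faithful (it leaves the underlying $\kk$-linear map unchanged), it sends projective $\kk G$-modules to projective $\kk N$-modules, and it sends the inflated modules $\pi^*V,\pi^*W$ to \emph{trivial} $\kk N$-modules, i.e. finite direct sums of copies of~$\kk$. Since a homomorphism of $\kk N$-modules factoring through a projective already factors through a free module, a map $f\colon\pi^*V\to\pi^*W$ factoring through a projective $\kk G$-module restricts to an element of $\Phom_{\kk N}(\kk^{a},\kk^{b})$ for suitable $a,b$. But $\Phom_{\kk N}(\kk,\kk)=0$: any $\kk N$-linear map from $\kk$ to a free module $F$ has image in $F^N$, which is spanned by the norm elements $\sum_{n\in N}n$, and each such element is sent by any $\kk N$-linear map $F\to\kk$ to $|N|$ times a scalar, hence to~$0$ since $p\mid|N|$. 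As $\Phom$ is an ideal of morphisms (Remark~\ref{rem:ideal}), this forces $\Phom_{\kk N}(\kk^{a},\kk^{b})=0$, whence $\Res^G_N f=0$ and $f=0$. I expect this to be the main obstacle---not for its length, but because one must notice that passing to~$N$ is exactly what makes the hypothesis $p\mid|N|$ bite; without it the statement is already false at $N=1$.

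\textbf{Essential image.} One containment is free: for a $\kk\bar G$-module $V$ the $\kk G$-module $\pi^*V$ has trivial $N$-action on restriction by construction, so every object in the essential image is isomorphic in $\kk G\sstab$ to such a module. For the converse, suppose $X\cong M$ in $\kk G\sstab$ with $M$ a $\kk G$-module on which $N$ acts trivially, i.e. $n\cdot m=m$ for all $n\in N$, $m\in M$. Then the $\kk G$-action annihilates each $n-1$, hence the two-sided ideal they generate---two-sided because $N$ is normal---which is exactly the kernel of $\kk G\onto\kk\bar G$; so the action factors through $\kk\bar G$, giving $M=\pi^*V$ for $V=M$ viewed as a $\bar G$-module via $\bar g\cdot m:=g\cdot m$ (well defined as $N$ acts trivially). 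Therefore $X\cong\pi^*V$ in $\kk G\sstab$ and $X$ lies in the essential image, which proves the two descriptions coincide.
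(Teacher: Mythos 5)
Your proof is correct and follows essentially the same route as the paper: full faithfulness is reduced to the vanishing of stably trivial maps between inflated modules by restricting to~$N$ (the paper packages this as faithfulness of $\kk\mmod\to\kk N\sstab$ in a commutative diagram, which is exactly your norm-element computation that $\Phom_{\kk N}(\kk^a,\kk^b)=0$ when $p\mid|N|$), and the essential image is identified formally just as in the paper. Your write-up merely makes explicit the faithfulness step the paper asserts without proof.
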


\begin{Rem}
Objects of $\kk G\sstab$ are the same as those of
$\kk G\mmod$, \ie finitely generated $\kk G$-modules.
However, the property ``$\Res^G_N M$ has trivial $N$-action"
is not stable under isomorphism in~$\kk G\sstab$,
since one can add to $M$ a projective $\kk G$-module.
This explains the phrasing of the above statement.
\end{Rem}

\begin{proof}[Proof of Proposition~\ref{prop:infl}]
The image of $\Infl^{\bar G}_G:\kk \bar G\mmod\to \kk G\mmod$
consists precisely of those $\kk G$-modules on which $N$ acts
trivially. This gives the statement about the essential image
by taking closure under isomorphism in~$\kk G\sstab$.
To show that $\Infl^{\bar G}_G:\kk \bar G \mmod \to \kk G\sstab$
is fully faithful, note first that it is full because both
$\Infl^{\bar G}_G:\kk\bar G\mmod\to \kk G\mmod$ and
$\kk G\mmod\onto\kk G\sstab$ are full. For faithfulness,
consider the commutative diagram
\[
\xymatrix{
\kk G\sstab \ar[d]_-{\Res^G_N}
& \kk G\mmod \ar@{->>}[l]
& \kk\bar G\mmod \ar[l]_-{\Infl^{\bar G}_G} \ar[d]^-{\textrm{faithf.}}
\\
\kk N\sstab
&& \kk \mmod \ar[ll]_-{\textrm{faithf.}}
}
\]
As the right-hand and bottom functors are faithful, so is the top composite.
\end{proof}

\begin{proof}[Proof of Theorem~\ref{thm:main}.]
Let $A$ be an indecomposable tt-ring in~$\kk G\sstab$, where
$G=C_{p^n}$. We  proceed by induction on~$n$. The case $n=1$
was settled in Theorem~\ref{thm:C_p} so we assume $n\ge2$.
We can choose a $\kk G$-module $M$ representing the
object~$A$ in~$\kk G\sstab$ and therefore assume that $M$
has no projective summand.

Consider $N=C_p\lhd G$ the unique cyclic subgroup of order~$p$.
We claim that $\Res^G_N M$ has trivial $C_p$-action (in $\kk C_p\mmod$).

Suppose first that $p=2$. We need to prove that $\Res^G_{C_2} M$
does not contain a projective summand. By Proposition~\ref{prop:C_4},
we know that $\Res^G_{C_4}M$ has no indecomposable summand~$[3]$
of dimension~3. By Proposition~\ref{prop:res-dim} for $m=2$,
we know therefore that $\Res^G_{C_4}M$ has no projective factor either.
Hence, $\Res^G_{C_4} M$ consists of a sum of copies of~$[1]$ and~$[2]$,
which both restrict to trivial modules over~$C_2$.

Suppose now that $p$ is odd. Then we can use Proposition~\ref{prop:res-dim}
directly from~$G$ to $C_p$ (\ie take $m=1$). We know that
$\Res^G_{C_p}M$ consists only of trivial $\kk C_p$-modules and
possibly projectives, since its class in the stable category
is trivial by Theorem~\ref{thm:C_p}. None of those indecomposable
summands have dimension~$p-1$. Hence, Proposition~\ref{prop:res-dim}
tells us that $\Res^G_{C_p} M$ has no projective summand.

So, we have proved the claim: $\Res^G_{C_p} M$ has no projective summand.
On the other hand, by Theorem~\ref{thm:C_p}, this $\Res^G_{C_p} M$
is trivial in the stable category. Combining both facts, we have
shown that $\Res^G_{C_p} M$ has trivial $\kk C_p$-action.
By Proposition~\ref{prop:infl} for our $N=C_p\lhd G$, it follows
that the image~$A$ of~$M$ in $\kk G\sstab$ belongs to the essential
image of the fully faithful tensor functor
$\kk \bar G\mmod\to \kk G\sstab$ where $\bar G=G/N$. By the much
easier ordinary module category case (see~\cite[Rem.\,4.6]{Balmer15}),
we see that $A\simeq \pi^*\kk X\simeq \kk \pi^*X$ for some finite
$\bar G$-set~$X$, which can then be viewed as a finite $G$-set $\pi^*X$
through $\pi:G\to \bar G$.
\end{proof}



\end{document}